\documentclass[11pt]{article}

\usepackage[T1]{fontenc}
\usepackage[utf8]{inputenc}
\usepackage{lmodern}
\usepackage[margin=1in]{geometry}

\usepackage{amsmath,amssymb,amsthm,mathtools,mathrsfs,cite}
\usepackage{xcolor,bbm}
\usepackage{enumitem}
\usepackage{csquotes}
\usepackage[hidelinks]{hyperref}
\usepackage{cleveref}

\newtheorem{theorem}{Theorem}[section]

\newtheorem{proposition}[theorem]{Proposition}
\newtheorem{lemma}[theorem]{Lemma}
\newtheorem{corollary}[theorem]{Corollary}
\theoremstyle{definition}
\newtheorem{definition}[theorem]{Definition}
\theoremstyle{remark}
\newtheorem{remark}[theorem]{Remark}

\definecolor{revieworange}{HTML}{E87500}

\newcommand{\C}{\mathbb{C}}
\newcommand{\R}{\mathbb{R}}
\newcommand{\E}{\mathbb{E}}
\newcommand{\dd}{\,d}

\title{Solutions to 6 problems in recurrence and Van der Corput sets}
\author{%
    Sohail Farhangi\thanks{Beijing Institute of Mathematical Sciences
    and Applications (BIMSA), Beijing, China.}
    \and
    Sa\'ul Rodr\'iguez Mart\'in\thanks{Department of Mathematics,
    The Ohio State University, Columbus, Ohio, USA.}%
}
\date{\vspace{-30pt}}

\begin{document}

\maketitle
\begin{abstract}
    We present solutions to six problems concerning sets of nice recurrence and van der Corput (vdC) sets.
    We answer a question of Moreira, repeated by Fish and Skinner, by showing that every set of ergodic nice recurrence is a set of nice recurrence.
    We answer questions of Bergelson as well as Bergelson and Lesigne by showing that every set of nice recurrence is a nice vdC-set, that the family of nice vdC-sets is partition regular, and that the family of sets of nice recurrence is partition regular.
    We mention that Chapman independently proved that sets of nice recurrence are partition regular. 
    We answer a question of Kelly and L\^e, a special case of which was previously asked by Peres, and show that for every nontrivial compact metrizable group $K$, every $K$-vdC set is also a vdC set.
    Lastly, we answer a question of Farhangi, Rodr\'iguez, and Tucker-Drob and show that for any countably infinite discrete group $G$, a vdC set in $G$ is a set of operatorial recurrence in $G$.

    The initial solutions to these problems were produced by an AI system. The authors checked their correctness, simplified and reformulated them as clear, human-readable proofs, and situated the resulting contributions within the existing literature.
\end{abstract}

\begingroup
\small
\noindent\textbf{2020 Mathematics Subject Classification.}
Primary 37A44; Secondary 05D10, 11K06, 37A46.\par
\smallskip
\noindent\textbf{Keywords.}
Sets of recurrence, van der Corput sets, partition regularity, uniform distribution,
compact groups, positive definite sequences.\par
\endgroup
\medskip
\section{Introduction}
The Furstenberg correspondence principle \cite{FurstenbergBook,FurstenbergsProofOfSzemeredi} gives a link between density Ramsey theory and measure-preserving dynamics that spawned the field now known as ergodic Ramsey theory.
The correspondence principle identifies questions about finite configurations appearing in positive density subsets of $\mathbb{N}$ with questions about multiple recurrence for sets of positive measure in measure-preserving systems.
The most basic instance of this is seen when working with sets of recurrence.
A subset $R\subseteq\mathbb{N}$ is a \textbf{set of measurable recurrence}
if, for every measure-preserving system\footnote{Here, $(X,\mathscr{B},\mu)$ is a
probability space and $T\colon X\to X$ is measure preserving; that is,
$\mu(T^{-1}A)=\mu(A)$ for every $A\in\mathscr{B}$.} $(X,\mathscr{B},\mu,T)$ and every
$A\in\mathscr{B}$,
\begin{equation*}
    \mu\bigl(A\cap T^{-r}A\bigr)=0
    \quad\text{for every } r\in R
    \qquad\Longrightarrow\qquad
    \mu(A)=0.
\end{equation*}
The correspondence principle tells us that $R \subseteq \mathbb{N}$ is a set of measurable recurrence if and only if for every $A \subseteq \mathbb{N}$ satisfying $\overline{d}(A) := \displaystyle\overline{\lim_{N\rightarrow\infty}}\frac{|A\cap[1,N]}{N} > 0$ there exists an $r \in R$ for which $\overline{d}(A\cap(A-r)) > 0$.
Using the pigeonhole principle, one can show that sets of differences, i.e., sets of the form $A = \{n_i-n_j\ |\ i > j\}$ for some increasing sequence $(n_i)_{i = 1}^\infty$, are sets of measurable recurrence.
The Furstenberg-S\'ark\"ozy Theorem \cite{FurstenbergsProofOfSzemeredi,SarkozyDifferenceSetsI} (see also \cite[Page~53]{ERTAnUpdate}) can be reinterpreted as the statement that the squares are a set of measurable recurrence.
In fact, Bergelson, Furstenberg, and McCutcheon \cite[Corollary 2.1]{BFM1996IPPolynomialRecurrence} observed that if $p$ is a polynomial satisfying $p(\mathbb{N}) \subseteq \mathbb{N}$ and $p(0) = 0$, and $B \subseteq \mathbb{N}$ is an IP-set\footnote{The set $B \subseteq \mathbb{N}$ is an \textbf{IP-set} if there exists a sequence $(x_n)_{n = 1}^\infty$ in $\mathbb{N}$ such that for any nonempty finite set $F \subseteq \mathbb{N}$, we have $\sum_{f \in F}x_f \in B$.}, then $p(B)$ is a set of measurable recurrence.

A set $R \subseteq \mathbb{N}$ is a \textbf{van der Corput set (vdC set)} if for every sequence $(x_n)_{n = 1}^\infty \subseteq [0,1]$ for which $(x_{n+r}-x_n\pmod{1})_{n = 1}^\infty$ is uniformly distributed for all $r \in R$, we also have that $(x_n)_{n = 1}^\infty$ is uniformly distributed.
Peres \cite[Theorem 1.1]{PeresApplicationsOfBanachLimits} showed that $R$ is a vdC set if and only if for any unitary operator $U$ on a Hilbert space $\mathcal{H}$, and any $\xi \in \mathcal{H}$,
\begin{equation*}
    \langle U^r\xi,\xi\rangle = 0
    \quad\text{for every } r\in R
\qquad\Longrightarrow\qquad
    P\xi = 0.
\end{equation*}
where $P:\mathcal{H}\rightarrow\mathcal{H}$ is the orthogonal projection onto the space of $U$-invariant vectors.
This characterization of vdC sets was independently rediscovered by Nin\v{c}evi\'c, Rabar, and Slijep\v{c}evi\'c \cite{vdCIsOperatorialRecurrent}, who called such sets operator recurrent.
We choose to follow the convention of \cite{FTD}, and refer to vdC sets as \textbf{sets of operatorial recurrence}.
Since every (invertible) measure-preserving system \((X,\mathscr B,\mu,T)\) gives rise to the so-called Koopman unitary operator \(f\mapsto f\circ T\) on \(L^2(X,\mu)\), applying operatorial recurrence to \(\xi=\mathbf 1_A\) shows that every set of operatorial recurrence is a set of measurable recurrence.
Ruzsa \cite[Page 1425]{RuzsavanderCorput} asked whether or not there exists a set of measurable recurrence that is not a vdC set, and Bourgain \cite{vdCStrongerThanRecurrence} constructed an example of such a set (see also \cite{RefiningBourgain}).
While the term vdC set is more classical (see \cite{RuzsavanderCorput,KMFvdC,PeresApplicationsOfBanachLimits,RefiningBourgain,vdCStrongerThanRecurrence,vanderCorputSetsInZ^d}), the language of operatorial recurrence more clearly highlights the relationship between sets of measurable recurrence and vdC sets.

There are many other variations of sets of recurrence and questions about the various possible implications between them.
For example, we say $R \subseteq \mathbb{N}$ is a \textbf{set of (ergodic) nice recurrence} if for every (ergodic) probability measure-preserving system $(X,\mathscr{B},\mu,T)$, every $A \in \mathscr{B}$, and every $\varepsilon > 0$, there exists $r \in R$ for which 
$\mu(A\cap T^{-r}A) \ge \mu(A)^2-\varepsilon$.
Sets of differences, the set of squares, and the polynomial images of IP-sets described above are all sets of nice recurrence.
Forrest \cite{Forrest1990Recurrence,Forrest1991RecurrenceNotStrong} constructed a set of measurable recurrence which is not a set of nice recurrence, and it follows from our own \Cref{r9ewiodfslk} that the constructions of Bourgain \cite{vdCStrongerThanRecurrence} and Mountakis \cite{RefiningBourgain} also yield sets of measurable but not nice recurrence.

It is a routine application of the ergodic decomposition to show that if $R$ is a set of recurrence for all ergodic measure-preserving systems, then it is a set of recurrence.
However, the same technique does not work in the setting of nice recurrence, as noted by the fact that Moreira \cite{Moreira2013NiceRecurrence} as well as Fish and Skinner \cite[Question 3]{FishSkinnerInverseFurstenberg} asked whether or not every set of ergodic nice recurrence is a set of nice recurrence.
We answer this question in the positive in \Cref{r9ewiodfslk}, i.e., we show that every set of ergodic nice recurrence is a set of nice recurrence.

Bergelson and Lesigne \cite[Definition 10]{vanderCorputSetsInZ^d} introduced the notion of nice vdC sets and asked about its relationship with nice recurrence.
Farhangi \cite[Theorem~5.1.6]{FarhangiThesis} proved that every nice vdC set in $\mathbb N$ is a set of nice recurrence, and Rodr\'iguez \cite[Section~2]{SaulInverseFurstenberg} extended this implication to countably infinite amenable groups.
In \Cref{r9ewiodfslk}, we prove the converse implication, proving that a subset of any countable group is a set of nice recurrence if and only if it is nice vdC.

It is a classical fact that sets of recurrence are partition regular, i.e., if $R$ is a set of recurrence and $R = R_1\bigcup R_2$, then either $R_1$ or $R_2$ must also be a set of recurrence.
Similarly, vdC sets and sets of operatorial recurrence are also partition regular.
The partition regularity of sets of recurrence follows immediately from considering products of measure-preserving systems, and the partition regularity of sets of operatorial recurrence follows from considering tensor products of Hilbert spaces and unitary operators.
The question whether the family of sets of nice recurrence is partition
regular has appeared repeatedly in the literature
\cite{vanderCorputSetsInZ^d,ERTAnUpdate,Forrest1990Recurrence,
Krause2024PointwiseErgodicTheory,DensitySchur}.
Bergelson and Lesigne also asked whether nice vdC sets are
partition regular \cite[Question~9]{vanderCorputSetsInZ^d}.
After recalling that the class of sets of nice recurrence coincides with the class of nice vdC sets, we give a positive answer to both questions by showing that sets of nice recurrence are partition regular.
This result was independently proven by Jonathan Chapman \cite{Chapman2026NiceRecurrence} without the use of AI.\footnote{We prompted the AI system to work on this problem on July 24, 2026.
At that time, Chapman's article had not yet appeared on arXiv, and we were therefore unaware of his solution.} 

Although uniform distribution was originally defined for sequences in the interval $[0,1]$, the notion extends naturally to the unit torus $\mathbb{T}$ and, more generally, to any compact group $K$.
Namely, a sequence $(x_n)_{n=1}^\infty$ in $K$ is \textbf{uniformly distributed} if, for every $f\in C(K)$,
\begin{equation*}
    \lim_{N\to\infty}\frac{1}{N}\sum_{n=1}^N f(x_n)
    =
    \int_K f\dd m_K,
\end{equation*}
where $m_K$ denotes the normalized Haar measure on $K$.

Given a compact group $K$ and a set $R \subseteq \mathbb{N}$, $R$ is a $K$-vdC set if for any sequence $(x_n)_{n = 1}^\infty$ in $K$ for which $(x_{n+r}x_n^{-1})_{n = 1}^\infty$ is uniformly distributed in $K$ for all $r \in R$, we have that $(x_n)_{n = 1}^\infty$ is itself uniformly distributed in $K$.
Peres \cite[Section 4]{PeresApplicationsOfBanachLimits} observed that for any compact abelian group $K$, every vdC set is also a $K$-vdC set, and asked when the converse holds.
Kelly and L\^e \cite[Theorem 1]{vdCSetsWithRespectToCompactGroups} independently observed that for any compact group $K$, every vdC set is also a $K$-vdC set, and asked when the converse holds.
Rodr\'iguez \cite[Proposition 7.12]{SaulInverseFurstenberg} showed that every $\mathbb{Z}/2\mathbb{Z}$-vdC set is a vdC set, thereby answering a special case of the questions of Peres, Kelly, and L\^e.
As a consequence of \Cref{thm:equivalenceForCompactvdC,thm:Sequence(KG)vdCIsMeasureTheoretic(KG)vdC}, we answer these questions and show that for every nontrivial compact metrizable group $K$, every $K$-vdC set is a vdC set (see also \Cref{rem:MetrizabilityAssumption}).

When $G$ is a countably infinite discrete group, Rodr\'iguez \cite[Definition 1.13]{SaulInverseFurstenberg} defined $R \subseteq G$ to be a vdC set if for every measure-preserving system $(X,\mathscr{B},\mu,(T_g)_{g \in G})$ and every $f \in L^\infty(X,\mu)$,
\begin{equation}
\label{Eq:MeasurableDefVdC}
    \int_X f(T_rx)\,\overline{f(x)}\,\dd\mu(x)= 0
    \quad\text{for every } r\in R
    \qquad\Longrightarrow\qquad
    \int_X f(x)\,\dd\mu(x)= 0,
\end{equation}
and he showed that this is equivalent to the standard definition involving uniform distribution when $G$ is amenable.
In this setting we define $R \subseteq G$ to be a set of operatorial recurrence if for every unitary representation $U$ of $G$ on a Hilbert space $\mathcal{H}$, and every vector $\xi \in \mathcal{H}$ satisfying $\langle U^r\xi,\xi\rangle = 0$ for all $r \in R$, we have $P\xi = 0$, where $P:\mathcal{H}\rightarrow\mathcal{H}$ is the orthogonal projection onto the space of $U$-invariant vectors.
It is a consequence of the Gaussian measure space construction (see \Cref{ssec:GMSC}) that $R$ is a set of operatorial recurrence if and only if \Cref{Eq:MeasurableDefVdC} is satisfied for every measure-preserving system $(X,\mathscr{B},\mu,(T_g)_{g \in G})$ and every $f \in L^2(X,\mu)$.
Rodr\'iguez \cite{SaulInverseFurstenberg} as well as Farhangi and Tucker-Drob \cite{FTD} observed that when $G$ is abelian, every vdC set is also a set of operatorial recurrence, but they were not able to determine whether or not this is the case in general.
We show that for every countably infinite discrete group $G$, a vdC set in $G$ is also a set of operatorial recurrence in \Cref{Thm:vdCiffOpRec} (see also \Cref{rem:NontrivialityOfLInfinityVersusL2}).\\

\noindent\textbf{Organization:} The paper is organized as follows.
In \Cref{sec:EquivalentFormsOfNiceRecurrence}, we establish several equivalent formulations of nice recurrence, including its equivalence with ergodic nice recurrence and nice vdC.
In \Cref{sec:PartitionRegularity}, we use the characterization of sets of nice recurrence (or equivalently, nice vdC sets) in terms of positive-definite functions to prove that these families of sets are partition regular.
In \Cref{sec:vdCSetsWRTCompactGroups}, we study vdC sets with respect to compact metrizable groups, first in a measure-theoretic formulation and then through uniform distribution along F\o lner
sequences.
Finally, in \Cref{sec:vdCIsOperatorRecurrent}, we recall the Gaussian measure space construction and use it to prove that every vdC set in a countably infinite group is a set of operatorial recurrence.\\

\noindent\textbf{AI use acknowledgement:} The initial solutions to the six problems presented in this paper were obtained using the large language model GPT Pro 5.6 sol.
The authors first examined each proposed argument in detail and verified its mathematical correctness.
They then substantially simplified and reformulated the arguments, organizing them as clear, human-readable proofs.
When writing this paper, ChatGPT was again used to rewrite several parts more clearly and to check the final paper for grammatical and mathematical correctness.
The authors take full responsibility for the correctness of all statements and proofs in this paper.
%%%%%%%%%%%%%%%%%%%%%%%%%%%%%%%%%%%%%%%%%%%%%%%%%%%%%%%%%%%%%%%%%%%%%%%%%%%%%%%%%%%%%%%%%%%%%%%%%%%%%%%%%%%%%%%%%%%%%%%%
\section{Equivalent forms of nice recurrence}\label{sec:EquivalentFormsOfNiceRecurrence}

Throughout this paper, $G$ denotes a countably infinite discrete group with identity element $e$, and $H\subseteq G$.
A measure-preserving system (m.p.s.) is denoted by $(X,\mathscr{B},\mu,(T_g)_{g\in G})$, where $(X,\mathscr{B},\mu)$ is a standard probability space and $T$ is a measure-preserving action of $G$.
We will use $T$ both for the measure-preserving action of $G$ on $(X,\mathscr{B},\mu)$ and for the Koopman representation of $G$ on $L^2(X,\mu)$ given by $T_gf = f\circ T_{g^{-1}}$.

Our next result is a generalization of \cite[Proposition 4.1]{BFW} (see also \cite[Theorem 10.1]{BTZ}) in the setting of countably infinite groups.

\begin{proposition}
\label{CorrFunctionToCorrSet}
Let $G$ be a countably infinite group, let $(X,\mathscr{B},\mu,(T_g)_{g\in G})$ be an m.p.s. and let $f:X\to[0,1]$ be measurable.
Then there exist an m.p.s. $(Y,\mathcal{C},\nu,(S_g)_{g\in G})$ and a set $C\in\mathcal{C}$ such that, for all distinct $g_1,\dots,g_k\in G$, we have
\begin{equation*}
\nu\left(\bigcap_{i=1}^k S_{g_i}^{-1}C\right)
=\int_X \prod_{i=1}^k f(T_{g_i}x)\dd\mu(x).
\end{equation*}
If the original m.p.s. is ergodic, then the constructed m.p.s. is ergodic as well.
\end{proposition}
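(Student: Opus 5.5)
The plan is the standard "randomization" construction: realize $f$ as the probability that a point is colored, independently at each site.

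\textbf{Construction.} Let $Y = X\times[0,1]^G$ with measure $\nu=\mu\otimes\lambda^{\otimes G}$, where $\lambda$ is Lebesgue measure on $[0,1]$; this is a standard probability space since $G$ is countable. Define the action
\[
S_g(x,(t_h)_{h\in G}) = \bigl(T_gx,(t_{hg})_{h\in G}\bigr).
\]
The group law and measure preservation are immediate, since the shift preserves the product measure and $T$ preserves $\mu$; the indexing convention ($hg$ versus $gh$) must be chosen so that $S_{gh}=S_gS_h$ matches the convention for $T$. Set
\[
C=\{(x,t): t_e\le f(x)\}.
\]
Then $S_g^{-1}C$, or $S_gC$ depending on the convention used in the statement, is $\{(x,t): t_g\le f(T_gx)\}$ after the appropriate reindexing.

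\textbf{Verification of the formula.} For distinct $g_1,\dots,g_k$, the coordinates $t_{g_1},\dots,t_{g_k}$ are independent uniform random variables and independent of $x$. Fubini then gives
\[
\nu\Bigl(\bigcap_i S_{g_i}^{-1}C\Bigr)=\int_X\prod_i \lambda\bigl([0,f(T_{g_i}x)]\bigr)\dd\mu=\int_X\prod_i f(T_{g_i}x)\dd\mu .
\]
Distinctness is exactly what guarantees independence; if two of the $g_i$ coincided, we would get $f$ rather than $f^2$. The main care in this step is getting the left/right conventions consistent, so that the intersection involves the coordinates $t_{g_i}$ evaluated against $f(T_{g_i}x)$ with the same $g_i$.

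\textbf{Ergodicity.} This is the only nontrivial step. Suppose $T$ is ergodic. Then $S$ is the product of an ergodic action with the Bernoulli shift $\lambda^{\otimes G}$. A Bernoulli shift of an infinite group is mixing, and hence weakly mixing. The product of a weakly mixing action with an ergodic one is ergodic, and this holds for arbitrary countable groups.

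If I want to avoid relying on that general fact, here is a direct argument. Let $F\in L^2(Y)$ be $S$-invariant. Expand $F$ in an orthonormal basis of $L^2([0,1]^G)$ built from products of a basis of $L^2[0,1]$ (containing the constant function $1$) over finite sets of coordinates, and write
\[
F=\sum_{\phi} a_\phi(x)\,\phi(t).
\]
Invariance forces $a_{\phi\circ\sigma_g}$ to equal $a_\phi\circ T_g^{\pm1}$, so $\|a_\phi\|$ is constant along the orbit of $\phi$ under the shift. Every nonconstant basis element has an infinite orbit, because its finite support gets translated to infinitely many distinct sets. Square-summability then forces $a_\phi=0$ for every nonconstant $\phi$. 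Hence $F=a_1(x)$, and $a_1$ is $T$-invariant, so it is constant by the ergodicity of $T$.
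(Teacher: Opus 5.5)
Your proposal is correct and follows the paper's proof essentially verbatim: the same skew product $X\times[0,1]^G$ with the right Bernoulli shift $(t_h)_{h}\mapsto(t_{hg})_{h}$, the same set $C=\{t_e\le f(x)\}$ (so that $S_g^{-1}C=\{t_g\le f(T_gx)\}$), Fubini together with the independence of distinct coordinates, and ergodicity from the weak mixing of the Bernoulli shift. Your optional direct ergodicity argument is a valid self-contained replacement for the cited weak-mixing fact: the shift orbit of each nonconstant product basis function is infinite, so square-summability forces its coefficient to vanish.
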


\begin{proof}
Let $m$ denote Lebesgue measure on $[0,1]$, equip $[0,1]^G$ with the
product measure $m^G$, and let $G$ act on $[0,1]^G$ by the right
Bernoulli shift
\begin{equation*}
(\Phi_g t)_h=t_{hg}\qquad(g,h\in G).
\end{equation*}
Thus $\Phi_g\Phi_h=\Phi_{gh}$.  Set
\begin{equation*}
(Y,\mathcal{C},\nu)
=\bigl(X\times[0,1]^G,\mathscr{B}\otimes\mathscr{B}([0,1])^{\otimes G},
\mu\otimes m^G\bigr)
\end{equation*}
and let $S_g=T_g\times\Phi_g$.  If $e$ denotes the identity of $G$, define
\begin{equation*}
C=\{(x,(t_h)_{h\in G})\in X\times[0,1]^G:t_e\leq f(x)\}.
\end{equation*}
For distinct $g_1,\dots,g_k\in G$, Fubini's theorem and the independence
of the coordinates indexed by $g_1,\dots,g_k$ give
\begin{align*}
\nu\left(\bigcap_{i=1}^k S_{g_i}^{-1}C\right)
&=\nu\bigl(\{(x,t)\in X\times[0,1]^G:t_{g_i}\leq f(T_{g_i}x)
\text{ for }i=1,\dots,k\}\bigr)\\
&=\int_X m^G\bigl(\{t\in[0,1]^G:t_{g_i}\leq f(T_{g_i}x)
\text{ for }i=1,\dots,k\}\bigr)\dd\mu(x)\\
&=\int_X\prod_{i=1}^k f(T_{g_i}x)\dd\mu(x).
\end{align*}
Finally, the Bernoulli shift of an infinite group is weakly mixing.
Hence its
product with any ergodic measure-preserving $G$-action is ergodic.
\end{proof}

We are ready to prove the following theorem: 

\begin{theorem}\label{r9ewiodfslk}
Let $G$ be a countably infinite group and let $R\subseteq G$ be nonempty.
The following are equivalent:
\begin{enumerate}[label=\alph*)]
    \item (Nice recurrence)\label{r9ewiodfslk1} For every m.p.s. $(X,\mathscr{B},\mu,(T_g)_{g\in G})$, every
    $A\in\mathscr{B}$, and every $\varepsilon>0$, there is $r\in R$ such that
\begin{equation*}
\mu(A\cap T_r^{-1}A)\geq\mu(A)^2-\varepsilon.
\end{equation*}
    \item (Ergodic nice recurrence)\label{r9ewiodfslk1Erg} The same statement
    holds for every ergodic m.p.s.
    
    \item (Nice operatorial recurrence)\label{r9ewiodfslk2} For every unitary representation $\pi$ of
    $G$ on a Hilbert space $\mathcal{H}$, every $f\in\mathcal{H}$, and every $\varepsilon > 0$, there is $r \in R$ for which $|\langle\pi(r)f,f\rangle| \ge \|Pf\|^2-\varepsilon$, where $P:\mathcal{H}\rightarrow\mathcal{H}$ is the orthogonal projection onto the space of $\pi$-invariant vectors.

    \item\label{r9ewiodfslkPD} Every real-valued positive-definite
    function $\varphi\colon G\to\mathbb{R}$ satisfies $\sup\{\varphi(r);r\in R\}\geq0$. 
    
    \item (Nice vdC)\label{r9ewiodfslk5} For every m.p.s. $(X,\mathscr{B},\mu,(T_g)_{g\in G})$, every
    $f\in L^\infty(X,\mu)$, and every $\varepsilon>0$, there is $r\in R$
    such that
\begin{equation*}
\left|\int_X f(T_{r^{-1}}x)\overline{f(x)}\dd\mu(x)\right|
\geq\left|\int_Xf\dd\mu\right|^2-\varepsilon.
\end{equation*}
\end{enumerate}
If, in addition, $G$ is abelian, the preceding conditions are also equivalent to the following:
\begin{enumerate}[resume, label=\alph*)]
    \item (Nice FC$^+$)\label{r9ewiodfslk4} For every probability measure $\tau$ on $\widehat G$, we have
\begin{equation*}
\sup_{r\in R}\left|\widehat\tau(r)\right|
\geq\tau\left(\{1_{\widehat G}\}\right).
\end{equation*}
\end{enumerate}
\end{theorem}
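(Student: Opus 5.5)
The plan is to prove the cycle (a)$\Rightarrow$(b)$\Rightarrow$(d)$\Rightarrow$(c)$\Rightarrow$(e)$\Rightarrow$(a), and then treat (f) separately via Bochner's theorem. The implication (a)$\Rightarrow$(b) is trivial. For (e)$\Rightarrow$(a), take $f=\mathbf 1_A$. Then $\int f(T_{r^{-1}}x)f(x)\dd\mu=\mu(T_r^{-1}A\cap A)$ up to the convention $T_g f=f\circ T_{g^{-1}}$; this integral is nonnegative, so the absolute value can be dropped. For (c)$\Rightarrow$(e), apply (c) to the Koopman representation and the vector $f$. The projection $Pf$ onto the invariant vectors satisfies $\|Pf\|^2\ge|\langle f,\mathbf 1\rangle|^2=|\int f\dd\mu|^2$, because $\mathbf 1$ is an invariant unit vector.

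For (d)$\Rightarrow$(c), fix $\pi$, $f$ and $\varepsilon$. Write $f=Pf+f_0$, so that $\langle\pi(g)f,f\rangle=\|Pf\|^2+\langle\pi(g)f_0,f_0\rangle$. Suppose for contradiction that $|\langle\pi(r)f,f\rangle|<\|Pf\|^2-\varepsilon$ for all $r\in R$. Then $\mathrm{Re}\langle\pi(r)f_0,f_0\rangle<-\varepsilon$ on $R$. The function $\psi(g)=\mathrm{Re}\langle\pi(g)f_0,f_0\rangle$ is a real positive-definite function, being the average of $\langle\pi(g)f_0,f_0\rangle$ and its conjugate (the latter is the matrix coefficient of the conjugate representation). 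Then $\psi+\varepsilon/2$ is not directly usable, since adding constants preserves positive-definiteness but we need a negative supremum. Instead, apply (d) to $\psi$ itself: it gives $\sup_R\psi\ge0$, contradicting $\psi<-\varepsilon$ on $R$. So (d)$\Rightarrow$(c) is direct.

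The main step is (b)$\Rightarrow$(d), which I argue by contrapositive. Suppose $\varphi$ is real positive-definite with $\varphi(r)\le-\delta<0$ on $R$. Normalize so that $\varphi(e)=1$, and realize $\varphi$ as $\varphi(g)=\langle\pi(g)\xi,\xi\rangle$ with $\pi$ an orthogonal representation on a real Hilbert space; this is possible since $\varphi$ is real and symmetric, $\varphi(g^{-1})=\varphi(g)$. Use the Gaussian measure space construction of \Cref{ssec:GMSC} to obtain an m.p.s. with a centered Gaussian variable $Z$ whose covariances are $\mathbb E[Z\circ T_g\cdot Z]=\varphi(g)$. To turn this into a set, take a bounded function: for small $t>0$ put $f=\tfrac12+tZ$, truncated to $[0,1]$. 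Then $\int f(T_gx)f(x)\dd\mu\approx\tfrac14+t^2\varphi(g)$, so on $R$ this is below $\tfrac14-t^2\delta/2$ once the truncation error, which is $o(t^2)$ uniformly in $g$ by Gaussian tails, is controlled. \Cref{CorrFunctionToCorrSet} then produces a set $C$ with $\nu(C)=\int f\approx\tfrac12$ and $\nu(C\cap S_r^{-1}C)\le\nu(C)^2-\eta$ for all $r\in R$ and some fixed $\eta>0$. This shows that nice recurrence fails.

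The obstacle is ergodicity. The Gaussian system need not be ergodic, and it is ergodic exactly when the spectral measure has no atoms, i.e.\ when $\pi$ has no invariant vectors or finite-dimensional subrepresentations in the relevant sense. First I would decompose $\xi$ against the invariant subspace: the invariant part contributes a nonnegative constant $c$ to $\varphi$, and removing it only decreases $\varphi$ on $R$. So we may assume $\pi$ has no invariant vectors, which makes $\varphi$ mean-zero in the weak sense. For countable $G$, ergodicity of the Gaussian action is equivalent to $\pi$ being weakly mixing, i.e.\ having no finite-dimensional subrepresentations. To handle the compact part, I would instead take $Y$ to be an ergodic product: replace $\varphi$ by $\varphi\cdot\varphi'$, where $\varphi'$ is a weakly mixing positive-definite function with $\varphi'\approx1$ on the relevant part of $R$. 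This does not work uniformly over an infinite $R$, so the fallback is a perturbation argument. The goal is to keep $\varphi\le-\delta/2$ on all of $R$ while killing the atoms of the spectral measure by composing with an ergodic rotation-type extension. This step I expect to require the most care.

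Finally, for abelian $G$, (d)$\Leftrightarrow$(f) follows from Bochner's theorem. By (c) applied to $L^2(\tau)$ with $f=1$ we have $\|Pf\|^2=\tau(\{1\})$, which gives (c)$\Rightarrow$(f). Conversely, given a real $\varphi=\widehat\sigma$ with $\varphi<-\delta$ on $R$, consider $\tau=\lambda\delta_1+(1-\lambda)\sigma/\sigma(\widehat G)$, choosing $\lambda$ so that $|\widehat\tau|<\lambda$ on $R$; this yields (f)$\Rightarrow$(d).
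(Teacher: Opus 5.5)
\textbf{Gap in (b)$\Rightarrow$(d), the ergodicity step.} Your other implications are fine. Your abelian argument is a valid, slightly more self-contained alternative to the paper's route through bounded realizations: apply (c) to the multiplication representation on $L^2(\widehat G,\tau)$ with the vector $1$, and conversely use $\tau=\lambda\delta_{1}+(1-\lambda)\sigma$ with any $\lambda\in(1/3,1)$.

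The problem is that your Gaussian system is in general not ergodic, so the set $C$ you build refutes (a) but not (b). Take $G=\mathbb Z$, $\varphi(n)=\cos(2\pi n\alpha)$ with $\alpha$ irrational, and $R=\{n\in\mathbb{N}:\cos(2\pi n\alpha)<-\tfrac12\}$. There are no invariant vectors. The Gaussian process is $Z_n=X\cos(2\pi n\alpha)+Y\sin(2\pi n\alpha)$, and $X^2+Y^2$ is a nonconstant invariant function. Neither repair you sketch closes this. Multiplying by a weakly mixing $\varphi'$ destroys uniform negativity on $R$, as you note. The ``perturbation'' plan is unspecified, and insofar as it aims to kill the atoms of the spectral measure it cannot work: here $\varphi$ is purely atomic, so nothing would remain. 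As written, you have proved (a)$\Leftrightarrow$(c)$\Leftrightarrow$(d)$\Leftrightarrow$(e), but (b) is left unconnected. That is exactly the implication answering Moreira's question.

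\textbf{The missing idea.} Nontrivial atoms are harmless if you realize them on an ergodic compact-group rotation instead of removing them. This is the ergodic realization theorem the paper imports from \cite{FTD} (Theorem 2.8 there). Concretely:
\begin{enumerate}[label=(\roman*)]
\item After discarding the invariant part, as you did, split the real GNS representation as $\pi_c\oplus\pi_{wm}$, where $\pi_c$ is the closed span of the finite-dimensional subrepresentations. Then $\varphi-c=\varphi_c+\varphi_{wm}$ with both pieces real positive-definite.
\item Realize $\varphi_c$ by a real mean-zero $L^2$ function on the translation action of $G$ on the compact group $K=\overline{\pi_c(G)}$ with Haar measure, via Peter--Weyl. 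This action is ergodic because the image of $G$ is dense. In the example it is the rotation by $\alpha$ with $f(x)=\sqrt2\cos 2\pi x$.
\item Realize $\varphi_{wm}$ in the Gaussian system. It is weakly mixing because $\pi_{wm}$ has no finite-dimensional subrepresentations.
\item Take the product system. It is ergodic because a weakly mixing action times an ergodic one is ergodic. Since both functions have mean zero, $f_c\otimes1+1\otimes f_{wm}$ has correlation $\varphi-c\le\varphi$.
\end{enumerate}
This realizing function is no longer Gaussian, so your Gaussian-tail truncation does not apply. Replace it by an $L^2$-approximation with a bounded real function. The correlation error is at most $\|f-f'\|_2(\|f\|_2+\|f'\|_2)$, uniformly in $g$. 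Then recenter, rescale, and finish with \Cref{CorrFunctionToCorrSet}, which preserves ergodicity.
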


\begin{proof}
The implication
\ref{r9ewiodfslk1}$\implies$\ref{r9ewiodfslk1Erg} is immediate.

We next prove the contrapositive of
\ref{r9ewiodfslk1Erg}$\implies$\ref{r9ewiodfslkPD}. Suppose that
\ref{r9ewiodfslkPD} fails. If $R$ is empty, then
\ref{r9ewiodfslk1Erg} plainly fails as well, so we may suppose that
$R$ is nonempty.
There exist a real-valued positive-definite function $\varphi\colon G\to\mathbb{R}$ and $\varepsilon_0>0$ such that
\begin{equation*}
\varphi(r)\leq-\varepsilon_0
\qquad\text{for every }r\in R\text{ and }\varphi(e) = 1.
\end{equation*}

The ergodic realization theorem for positive-definite functions
\cite[Theorem~2.8]{FTD} gives an ergodic m.p.s.
$(X,\mathscr{B},\mu,(T_g)_{g\in G})$ and a real-valued function
$f\in L^2(X,\mu)$ such that $\langle f,f\circ T_g\rangle=\varphi(g)$ for every $g\in G$.
Choose $\eta>0$ such that $\eta(2+\eta)<\varepsilon_0/2$, and choose a real-valued function
$f'\in L^\infty(X,\mu)$ satisfying $\|f-f'\|_2<\eta$. Since
$\|f'\|_2<1+\eta$, for every $g\in G$ we have
\begin{align*}
\left|\langle f',f'\circ T_g\rangle-\varphi(g)\right|
&\leq\|f'-f\|_2\bigl(\|f'\|_2+\|f\|_2\bigr)<\frac{\varepsilon_0}{2}.
\end{align*}
Set $F=f'-\int_Xf'\dd\mu$.
Then, for every $r\in R$,
\begin{equation*}
\langle F,F\circ T_r\rangle
=\langle f',f'\circ T_r\rangle
-\left(\int_Xf'\dd\mu\right)^2
<-\frac{\varepsilon_0}{2}.
\end{equation*}
Choose $\delta>0$ small enough that $h=\frac12+\delta F$ takes values in $[0,1]$.
We have $\int_Xh\dd\mu=\frac12$, and, for every $r\in R$,
\begin{align*}
\int_Xh(T_{r^{-1}}x)h(x)\dd\mu(x)
&=\frac14+\delta^2\langle F,F\circ T_r\rangle < \left(\int_Xh\dd\mu\right)^2-\frac{\delta^2\varepsilon_0}{2}.
\end{align*}
Notice that $e\notin R$, since $\varphi(e)=1$.
Thus \Cref{CorrFunctionToCorrSet} gives an ergodic m.p.s. $(Y,\mathcal{C},\nu,(S_g)_{g\in G})$ and a set
$C\in\mathcal{C}$ such that $\nu(C)=1/2$ and, for every $r\in R$,
\begin{equation*}
\nu(C\cap S_r^{-1}C)
<\nu(C)^2-\frac{\delta^2\varepsilon_0}{2}.
\end{equation*}
Consequently, \ref{r9ewiodfslk1Erg} fails.

We now prove
\ref{r9ewiodfslkPD}$\implies$\ref{r9ewiodfslk2}. Let $\pi$ be a
unitary representation of $G$ on a Hilbert space $\mathcal{H}$, let $f\in\mathcal{H}$, and write $f=Pf+f_0$.
The function $g\longmapsto\operatorname{Re}\langle\pi(g)f_0,f_0\rangle$ is real-valued and positive definite.
Hence, for every $\varepsilon>0$, there is $r\in R$ such that $\operatorname{Re}\langle\pi(r)f_0,f_0\rangle>-\varepsilon$.
Since the subspace of vectors invariant under $\pi$ and its orthogonal complement are both
$\pi$-invariant,
\begin{equation*}
\left|\langle\pi(r)f,f\rangle\right| \geq\operatorname{Re}\langle\pi(r)f,f\rangle = \|Pf\|^2+
\operatorname{Re}\langle\pi(r)f_0,f_0\rangle > \|Pf\|^2-\varepsilon.
\end{equation*}

We next prove \ref{r9ewiodfslk2}$\implies$\ref{r9ewiodfslk5}.
Let $(X,\mathscr{B},\mu,(T_g)_{g\in G})$ be an m.p.s., let $f\in L^\infty(X,\mu)$, and let $P$ be the orthogonal
projection from $L^2(X,\mu)$ onto the subspace of invariant functions.
Applying \ref{r9ewiodfslk2} to the Koopman representation gives, for
every $\varepsilon>0$, an $r\in R$ such that
\begin{equation*}
\left|\int_Xf(T_{r^{-1}}x)\overline{f(x)}\dd\mu(x)\right| \geq\|Pf\|_2^2-\varepsilon \geq\left|\int_Xf\dd\mu\right|^2-\varepsilon.
\end{equation*}

Finally, taking $f=1_A$ in
\ref{r9ewiodfslk5} gives \ref{r9ewiodfslk1}.
This completes the proof for an arbitrary countably infinite group.

Suppose now that $G$ is abelian. We first prove
\ref{r9ewiodfslk5}$\implies$\ref{r9ewiodfslk4}.
Let $\tau$ be a probability measure on $\widehat G$.
If $a := \tau(\{1_{\widehat{G}}\}) \in \{0,1\}$, then the desired result is immediate, so let us assume that $a \in (0,1)$.
Consider $\tau_a := (1-a)^{-1}(\tau-a\delta_{1_{\widehat{G}}})$.
We use \cite[Lemma~3.4]{FTD} to obtain an m.p.s. $(X,\mathscr{B},\mu,(T_g)_{g\in G})$ and $u \in L^\infty(X,\mu)$ such that $\widehat{\tau_a}(g) = \langle T_gu,u\rangle$ for all $g \in G$, and $0 = \tau_a(\{1_{\widehat{G}}\}) = \int_Xu\dd\mu$.
Let $u_a = \sqrt{a}\mathbbm{1}_X+\sqrt{1-a}u$, and observe that $\langle T_gu_a,u_a\rangle = a+(1-a)\langle T_gu,u\rangle = \widehat{\tau}(g)$ for all $g \in G$, and $\int_Xu_a\dd\mu = \sqrt{a}$.
We now see that

\begin{equation*}
    \sup_{r \in R}\left|\widehat{\tau}(r)\right| = \sup_{r \in R}\left|\langle T_ru_a,u_a\rangle\right| \ge \left|\int_Xu_a\dd\mu\right|^2 = a = \tau(\{1_{\widehat{G}}\}).
\end{equation*}

Conversely, suppose that \ref{r9ewiodfslk4} holds.
Let $(X,\mathscr{B},\mu,(T_g)_{g\in G})$ be an m.p.s., let $f\in L^\infty(X,\mu)$, and let $P$ denote the orthogonal projection from $L^2(X,\mu)$ onto the invariant functions.
Assuming without loss of generality that $\|f\|_2 = 1$, we apply the Bochner--Herglotz theorem \cite[Section~1.4.3]{RudinFourierGroups} to obtain a probability measure $\tau_f$ on $\widehat G$ such that
\begin{equation*}
\widehat{\tau_f}(g)
=\int_Xf(T_gx)\overline{f(x)}\dd\mu(x)\text{ for every }g\in G,\text{ and }\tau_f(\{1_{\widehat{G}}\}) = \|Pf\|_2^2 \ge \left|\int_Xf\dd\mu\right|^2.
\qedhere
\end{equation*}
\end{proof}

\begin{remark}\label{rem:OtherEquivalences}
    Theorems 1.4 and A.1 of \cite{FTD} give a long list of equivalent formulations of vdC sets.
    Almost all of these formulations have an analogue for nice vdC sets, but we do not pursue this here for the sake of brevity.
\end{remark}

%%%%%%%%%%%%%%%%%%%%%%%%%%%%%%%%%%%%%%%%%%%%%%%%%%%%%%%%%%%%%%%%%%%%%%%%%%%%%%%%%%%%%%%%%%%%%%%%%%%%%%%%%%%%%%%%%%%%%%%%%%%%%%%%%%%
\section{Partition regularity of nice vdC sets and sets of nice recurrence}\label{sec:PartitionRegularity}
In this section we prove that if the union of two subsets $A,B$ of a countably infinite group $G$ is nice vdC, then either $A$ or $B$ is nice vdC.
The proof relies on the characterization of nice vdC sets via positive-definite functions given in \Cref{r9ewiodfslk}\ref{r9ewiodfslkPD}.

\begin{lemma}
\label{r4ew9dsopl9erwofpds}
Let $G$ be a group.  If $\phi_1,\phi_2\colon G\to\mathbb{R}$ are positive
definite and $p\in\mathbb{R}[x,y]$ has nonnegative coefficients, then the function
\begin{equation*}
g\longmapsto p\left(\phi_1(g),\phi_2(g)\right)
\end{equation*}
is positive definite.
\end{lemma}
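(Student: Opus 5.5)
The proof rests on the fact that the positive-definite functions on $G$ form a convex cone which contains the constants and is closed under pointwise products. Every polynomial with nonnegative coefficients is a nonnegative combination of monomials $x^ay^b$. It therefore suffices to prove three things: positive-definite functions are closed under sums and nonnegative scalar multiples; the constant function $1$ is positive definite, which handles the monomial with $a=b=0$; and the pointwise product of two positive-definite functions is positive definite. Induction on $a+b$ then shows that $g\mapsto\phi_1(g)^a\phi_2(g)^b$ is positive definite, and summing the monomials with their nonnegative coefficients gives the claim.

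Closure under nonnegative combinations is immediate from the definition. We need that
\begin{equation*}
\sum_{i,j}c_i\overline{c_j}\,\phi(g_j^{-1}g_i)\geq0
\end{equation*}
for all finite families $g_1,\dots,g_n\in G$ and $c_1,\dots,c_n\in\C$, and this condition is linear in $\phi$. The constant function $1$ is positive definite because $\sum_{i,j}c_i\overline{c_j}=\bigl|\sum_ic_i\bigr|^2\geq0$.

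The key step is closure under products, which is the Schur product theorem. Fix $g_1,\dots,g_n$ and form the matrices
\begin{equation*}
M^{(k)}_{ij}=\phi_k(g_j^{-1}g_i),\qquad k=1,2.
\end{equation*}
Positive definiteness of $\phi_k$ says exactly that $M^{(k)}$ is positive semidefinite (Hermitian, since $\phi_k(g^{-1})=\overline{\phi_k(g)}$ for positive-definite functions). The matrix attached to the product $\phi_1\phi_2$ is the entrywise (Hadamard) product $M^{(1)}\circ M^{(2)}$. To see that this is positive semidefinite, write $M^{(2)}=\sum_\ell v_\ell v_\ell^*$ as a sum of rank-one matrices. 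Then
\begin{equation*}
c^*\bigl(M^{(1)}\circ v_\ell v_\ell^*\bigr)c=(c\circ\overline{v_\ell})^*M^{(1)}(c\circ\overline{v_\ell})\geq0,
\end{equation*}
and summing over $\ell$ gives $c^*\bigl(M^{(1)}\circ M^{(2)}\bigr)c\geq0$.

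Alternatively, one can use the GNS representation. Write $\phi_k(g)=\langle\pi_k(g)\xi_k,\xi_k\rangle$. Then $\phi_1\phi_2(g)=\langle(\pi_1\otimes\pi_2)(g)\,\xi_1\otimes\xi_2,\ \xi_1\otimes\xi_2\rangle$, which is positive definite.

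The only real content is the Hadamard product step, and it is classical. The rest is bookkeeping. The real-valuedness of the $\phi_k$ is not needed for positive definiteness; it only ensures that the composition $p(\phi_1,\phi_2)$ is real-valued, so that it fits the framework of \Cref{r9ewiodfslk}\ref{r9ewiodfslkPD}.
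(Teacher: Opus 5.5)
Your proof is correct and follows the paper's: both reduce the claim to the facts that positive-definite functions form a cone containing the nonnegative constants and are closed under pointwise products. The paper obtains all of these at once from the GNS representation (direct sums and tensor products), which is exactly your alternative argument, while your Schur-product computation is the matrix-level form of the same tensor-product step and avoids GNS altogether.
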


\begin{proof}
By the GNS construction \cite[Theorem~3.20]{Folland}, a function
$\phi\colon G\to\mathbb C$ is positive definite if and only if there exist
a unitary representation $\pi$ of $G$ on a Hilbert space $\mathcal H$ and
a vector $\xi\in\mathcal H$ such that
\[
\phi(g)=\langle\pi(g)\xi,\xi\rangle
\qquad\text{for every }g\in G.
\]
Consequently, if $\lambda\geq0$ and $\phi_1$ and $\phi_2$ are positive definite, then so are $\lambda\phi_1$, $\phi_1+\phi_2$, $\phi_1\phi_2$ (the last two follow by considering direct sums and
tensor products of unitary representations).
Since every nonnegative constant function is positive definite, the
result follows.
\end{proof}

\begin{lemma}
\label{4re9wfoidslk}
For all positive real numbers $0<\varepsilon<M$ there exist $\delta>0$ and
$K\in\mathbb{N}$ such that, whenever $x,y\in[-M,M]$ and
either $x<-\varepsilon$ or $y<-\varepsilon$, we have
\begin{equation*}
x(2M+y)^K+y(2M+x)^K\le-\delta.
\end{equation*}
\end{lemma}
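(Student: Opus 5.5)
By the symmetry of the expression in $x$ and $y$, it suffices to treat the case $x<-\varepsilon$. Set $a=2M+x$ and $b=2M+y$. For $x,y\in[-M,M]$ we have $a,b\in[M,3M]$, and $x<-\varepsilon$ forces $a<2M-\varepsilon$. The expression to bound is
\[
E = x\,b^K + y\,a^K .
\]
The plan is to show that the term $x\,b^K$ dominates for large $K$. I split according to whether $y$ is nonpositive or positive.

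\emph{Case $y\le 0$.} Then $y\,a^K\le 0$. Since $b\ge M>0$, we get
\[
E \le x\,b^K < -\varepsilon\, b^K \le -\varepsilon M^K .
\]
This is at most $-\varepsilon M^{K}$, which is a valid negative bound for any $K\ge 1$.

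\emph{Case $y>0$.} Now $b=2M+y>2M$ while $a<2M-\varepsilon$, so
\[
E < -\varepsilon\, b^K + M a^K \le -\varepsilon\,(2M)^K + M\,(2M-\varepsilon)^K .
\]
The key step is choosing $K$ so that $M\,(2M-\varepsilon)^K\le \tfrac{\varepsilon}{2}(2M)^K$. This holds because the ratio $\bigl((2M-\varepsilon)/(2M)\bigr)^K$ tends to $0$ as $K\to\infty$. With such a $K$, we get $E\le -\tfrac{\varepsilon}{2}(2M)^K$.

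Finally, I take $\delta=\min\bigl(\varepsilon M^K,\ \tfrac{\varepsilon}{2}(2M)^K\bigr)=\varepsilon M^K$ with this fixed $K$, so both cases give $E\le-\delta$. No step is a real obstacle. The only point needing care is the exponential comparison in the case $y>0$, where the positive term $y\,a^K$ must be beaten by the gap between $b>2M$ and $a<2M-\varepsilon$. The hypothesis $\varepsilon<M$ only guarantees that $2M-\varepsilon>0$, so all bases are positive.
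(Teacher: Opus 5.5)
Your proof is correct and matches the paper's argument: the same symmetry reduction to $x<-\varepsilon$, the same split by the sign of $y$, the same choice of $K$ with $M(2M-\varepsilon)^K\le\frac{\varepsilon}{2}(2M)^K$, and the same $\delta=\varepsilon M^K$. The paper additionally takes $K$ even, but your argument shows this is unnecessary, since $2M+x\ge M>0$ keeps all the bases positive.
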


\begin{proof}
Choose an even integer $K\geq2$ large enough that
\begin{equation*}
M(2M-\varepsilon)^K\leq\frac{\varepsilon}{2}(2M)^K,
\end{equation*}
and set $\delta=\varepsilon M^K$.
By symmetry, we may assume that
$x<-\varepsilon$, so that $\left|2M+x\right|<2M-\varepsilon$.  
If $y\leq0$, then
\begin{equation*}
x(2M+y)^K+y(2M+x)^K
\leq x(2M+y)^K
<-\varepsilon M^K=-\delta.
\end{equation*}
If $y\geq0$, then $2M+y\geq2M$, and hence
\begin{equation*}
x(2M+y)^K+y(2M+x)^K
<-\varepsilon(2M)^K+M(2M-\varepsilon)^K
\leq-\frac{\varepsilon}{2}(2M)^K
=-2^{K-1}\delta.\qedhere
\end{equation*}
\end{proof}

\begin{theorem}\label{thm:NicevdCIsPartitionRegular}
Let $G$ be a countably infinite group.
If $A,B\subseteq G$ are not nice vdC, then $A\cup B$ is not nice vdC.
\end{theorem}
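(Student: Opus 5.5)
We argue through the positive-definite characterization in \Cref{r9ewiodfslk}\ref{r9ewiodfslkPD}. Since $A$ and $B$ are not nice vdC, there are real-valued positive-definite functions $\phi_1,\phi_2\colon G\to\mathbb{R}$ and some $\varepsilon_1,\varepsilon_2>0$ with $\phi_1(a)\le-\varepsilon_1$ for all $a\in A$ and $\phi_2(b)\le-\varepsilon_2$ for all $b\in B$. (If $A$ or $B$ is empty, the statement is trivial, so we assume both are nonempty.) The goal is a single real positive-definite function $\psi$ with $\sup_{r\in A\cup B}\psi(r)<0$. Applying \Cref{r9ewiodfslk} again then shows that $A\cup B$ is not nice vdC.

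First we normalize. A positive-definite function satisfies $|\phi_i(g)|\le\phi_i(e)$, so $\phi_i(e)>0$. Rescaling by positive constants preserves positive definiteness and the sign conditions, so we may assume $\phi_1(e)=\phi_2(e)=1$. Then both functions take values in $[-1,1]$ and are at most $-\varepsilon$ on $A$, respectively $B$, where $\varepsilon=\min(\varepsilon_1,\varepsilon_2)$. Put $M=1$; shrinking $\varepsilon$ if necessary, we have $0<\varepsilon<M$.

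Next, \Cref{4re9wfoidslk} supplies $\delta>0$ and $K\in\mathbb{N}$, and we set
\begin{equation*}
\psi(g)=\phi_1(g)\bigl(2M+\phi_2(g)\bigr)^K+\phi_2(g)\bigl(2M+\phi_1(g)\bigr)^K.
\end{equation*}
This is $p(\phi_1(g),\phi_2(g))$ for $p(x,y)=x(2M+y)^K+y(2M+x)^K$. Expanding binomially, $p$ has nonnegative coefficients because $2M>0$. So by \Cref{r4ew9dsopl9erwofpds}, $\psi$ is real-valued and positive definite.

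Finally, take $r\in A\cup B$. Then $x=\phi_1(r)$ and $y=\phi_2(r)$ lie in $[-M,M]$, and either $x<-\varepsilon/2$ or $y<-\varepsilon/2$. (To get the strict inequality that \Cref{4re9wfoidslk} requires, apply that lemma with $\varepsilon/2$ in place of $\varepsilon$.) The lemma gives $\psi(r)\le-\delta$, so $\sup_{r\in A\cup B}\psi(r)\le-\delta<0$. Hence condition \ref{r9ewiodfslkPD} fails for $A\cup B$, and $A\cup B$ is not nice vdC.

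The main difficulty is building a single function that is negative where either $\phi_1$ or $\phi_2$ is negative, since sums and products of positive-definite functions do not preserve negativity in the obvious way. \Cref{4re9wfoidslk} is exactly the tool that resolves this, so once it is available the remaining steps are routine bookkeeping: the normalization and the strict versus non-strict inequalities.
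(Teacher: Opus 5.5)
Your proposal is correct and is essentially the paper's proof. It uses the same witness $\psi=\phi_1(2M+\phi_2)^K+\phi_2(2M+\phi_1)^K$, which is positive definite by the nonnegative-coefficient lemma, is bounded above by $-\delta$ on $A\cup B$ by the elementary inequality lemma, and then contradicts the positive-definite characterization of nice vdC sets. The only differences are cosmetic: you normalize to $\phi_i(e)=1$ and take $M=1$ and $\varepsilon/2$, whereas the paper takes $M$ larger than both sup norms with strict witnesses $\phi<-\varepsilon$; you also treat the empty-set case explicitly.
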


\begin{proof}
Since $A$ and $B$ are not nice vdC, there are real positive-definite functions
$\phi_A,\phi_B\colon G\to\mathbb{R}$ and $\varepsilon>0$ such that
\begin{equation*}
\phi_A(a)<-\varepsilon\quad\text{for every }a\in A,
\qquad\text{and}\qquad
\phi_B(b)<-\varepsilon\quad\text{for every }b\in B.
\end{equation*}
Choose
\begin{equation*}
M>\max\left\{\|\phi_A\|_\infty,\|\phi_B\|_\infty\right\}.
\end{equation*}
By \Cref{4re9wfoidslk}, there exist $K\in\mathbb{N}$ and $\delta>0$ such that the function $\phi\colon G\to\mathbb{R}$ defined by
\begin{equation*}
\phi(g)
=\phi_A(g)\left(2M+\phi_B(g)\right)^K
+\phi_B(g)\left(2M+\phi_A(g)\right)^K
\end{equation*}
satisfies $\phi(g)\le-\delta$ for every $g\in A\cup B$.
Moreover, $\phi$ is positive definite by \Cref{r4ew9dsopl9erwofpds}.
Therefore, $A\cup B$ is not nice vdC.
\end{proof}

\begin{corollary}
Let $G$ be a countably infinite group, let $D\subseteq G$ be nice vdC, and suppose that
\begin{equation*}
D=D_1\cup\cdots\cup D_m.
\end{equation*}
Then at least one of the sets $D_1,\dots,D_m$ is nice vdC.
\end{corollary}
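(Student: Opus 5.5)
The plan is to argue by induction on $m$, using \Cref{thm:NicevdCIsPartitionRegular} as the inductive step. We proceed by contrapositive: assuming none of $D_1,\dots,D_m$ is nice vdC, we show that $D$ is not nice vdC.

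For $m=1$ there is nothing to prove, since $D=D_1$. For $m\ge 2$, set $E=D_1\cup\cdots\cup D_{m-1}$. By the inductive hypothesis applied to $E$ with its decomposition into $m-1$ pieces, $E$ is not nice vdC. Indeed, if $E$ were nice vdC, one of $D_1,\dots,D_{m-1}$ would be nice vdC, contrary to assumption. Then \Cref{thm:NicevdCIsPartitionRegular}, applied with $A=E$ and $B=D_m$, shows that $D=E\cup D_m$ is not nice vdC.

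One small point needs checking: the characterization in \Cref{r9ewiodfslk} is stated for nonempty $R$, and \Cref{thm:NicevdCIsPartitionRegular} uses condition \ref{r9ewiodfslkPD}. Empty pieces cause no trouble. The empty set is not nice vdC, since condition \ref{r9ewiodfslk5} fails trivially (take $f=\mathbbm 1_X$). Equally, \ref{r9ewiodfslkPD} fails vacuously for the empty set, because $\sup\emptyset=-\infty$. So \Cref{thm:NicevdCIsPartitionRegular} applies regardless of whether $A$ or $B$ is empty: its proof produces the required $\phi_A,\phi_B$, taking for instance $\phi_\emptyset\equiv 1$, which satisfies $\phi(g)<-\varepsilon$ vacuously on $\emptyset$.

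I expect no real obstacle. The only care needed is this bookkeeping of empty pieces, and it is handled by noting that the argument of \Cref{thm:NicevdCIsPartitionRegular} never uses nonemptiness.
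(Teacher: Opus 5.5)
Your proof is correct and follows the paper's argument exactly: contraposition plus induction on $m$, with the two-set partition-regularity theorem supplying the inductive step. Your check on empty pieces is harmless but not strictly needed, since that theorem is already stated for arbitrary $A,B\subseteq G$.
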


\begin{proof}
This follows from the preceding theorem by contraposition and induction on
$m$.
\end{proof}

\begin{remark}\label{rem:PartitionRegularity}
A single product of the two positive-definite witnesses does not suffice:
if both witnesses are negative, their product is positive.  The polynomial
used above instead acts as a logical ``or'': it is uniformly negative when
either input is uniformly negative, while its nonnegative coefficients
ensure that positive definiteness is preserved.
Recalling that \Cref{r9ewiodfslk} shows that nice recurrence is equivalent to nice vdC, we see that sets of nice recurrence are also partition regular.
As mentioned in the introduction, this answers a question of Bergelson, and was independently proven by Chapman without the use of AI.
\end{remark}

%%%%%%%%%%%%%%%%%%%%%%%%%%%%%%%%%%%%%%%%%%%%%%%%%%%%%%%%%%%%%%%%%%%%%%%%%%%%%%%%%%%%%%%%%%%%%%%%%%%%%%%%%%%%%%%%%%%%%%%%
\section{Van der Corput sets with respect to compact groups}\label{sec:vdCSetsWRTCompactGroups}

In this section we compare ordinary vdC sets in $G$ with two notions associated with a compact (metrizable) group $K$.
In \Cref{thm:equivalenceForCompactvdC}, we show that ordinary vdC sets are precisely the measure-theoretic $(K,G)$-vdC sets.
When $G$ is amenable and $\mathcal F$ is a left-F\o lner sequence, \Cref{thm:Sequence(KG)vdCIsMeasureTheoretic(KG)vdC} shows that the measure-theoretic notion is also equivalent to the formulation in terms of $\mathcal F$-uniform distribution of $K$-valued sequences.
Taking $G=\mathbb Z$ and $F_N=\{1,\ldots,N\}$ recovers the notion of a $K$-vdC set introduced in the Introduction.

\subsection{Preliminaries}
For amenable groups, the usual definition of a vdC set in terms of F\o lner averages is equivalent to the following dynamical condition; see \cite[Theorem~1.12]{SaulInverseFurstenberg} and also \cite[Theorem~1.1]{FTD}. 
For arbitrary countably infinite groups, \Cref{def:vdc} can be taken as the definition of vdC sets, as in \cite[Definition~1.13]{SaulInverseFurstenberg}.%; moreover, it is one of the equivalent characterizations of operatorial recurrence given in \cite[Theorem~4.2]{FTD}.

\begin{definition}\label{def:vdc}
We say that $H$ is \emph{vdC in $G$} if the following holds: whenever
$(X,\mathscr{B},\mu,(T_g)_{g\in G})$ is an m.p.s., $f\colon X\to\C$ is a
bounded measurable function, and
\begin{equation*}
\int_X f(T_hx)\overline{f(x)}\dd\mu(x)=0\text{ for all }h\in H\text{, then }\int_X f\dd\mu=0.
\end{equation*}
\end{definition}

Notice that every $H\subseteq G$ containing the identity $e$ is vdC in $G$: taking $h=e$
in the hypothesis gives $\int_X|f|^2\dd\mu=0$.

We use on $\C^d$ the inner product
$\langle z,w\rangle=w^*z$, which is linear in the first variable.

\begin{lemma}[Finite-dimensional vdC condition]\label{lem:FinDimCharvdC}%ChatGPT suggests shortening the label to fix some overflow error
Assume that $H$ is vdC in $G$.
Let $(X,\mathscr{B},\mu,(T_g)_{g\in G})$ be an m.p.s., and let
$V\colon X\to\C^d$ be bounded and measurable.
If
\begin{equation*}
\int_X\left\langle V(T_hx),V(x)\right\rangle\dd\mu(x)=0
\textup{ for all }h\in H,
\end{equation*}
then
\begin{equation*}
\int_XV\dd\mu=
\begin{pmatrix}
0\\\vdots\\0
\end{pmatrix}.
\end{equation*}
\end{lemma}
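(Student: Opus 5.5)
We reduce the vector-valued statement to the scalar \Cref{def:vdc} by building, from the data $(X,\mathscr{B},\mu,(T_g)_{g\in G})$ and $V=(V_1,\dots,V_d)$, a new m.p.s.\ together with a scalar bounded function whose correlation sequence is exactly $\int_X\langle V(T_hx),V(x)\rangle\dd\mu$.

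The natural choice is to randomize the coordinate. Consider the product space $Y=X\times\{1,\dots,d\}$ with measure $\mu\otimes u$, where $u$ is uniform on $\{1,\dots,d\}$. Let $G$ act by $S_g(x,j)=(T_gx,j)$, and set $F(x,j)=\sqrt{d}\,V_j(x)$. Then
\[
\int_Y F(S_hy)\overline{F(y)}\dd(\mu\otimes u)(y)=\sum_{j=1}^d\int_X V_j(T_hx)\overline{V_j(x)}\dd\mu(x)=\int_X\langle V(T_hx),V(x)\rangle\dd\mu(x)=0
\]
for all $h\in H$. Applying \Cref{def:vdc} gives $\int_Y F=d^{-1/2}\sum_j\int_X V_j\dd\mu=0$.

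This only kills the sum of the coordinates of $\int_X V\dd\mu$, so the construction needs a refinement. To get all coordinates we twist by a unit vector. For any fixed $w\in\C^d$, replace the uniform coordinate by an independent random unitary: take $Y=X\times U(d)$ with $\mu\otimes m_{U(d)}$ (Haar measure), $S_g(x,W)=(T_gx,W)$, and $F(x,W)=\langle WV(x),w\rangle\cdot c$ with a normalizing constant $c$. Then
\[
\int F(S_hy)\overline{F(y)}=|c|^2\int_{U(d)}\int_X w^*WV(T_hx)V(x)^*W^*w\dd\mu\dd W.
\]
By Schur orthogonality (invariance of Haar measure), $\int_{U(d)}W A W^*\dd W=\frac{\operatorname{tr}A}{d}I$. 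Hence this expression equals $|c|^2\frac{\|w\|^2}{d}\int_X\langle V(T_hx),V(x)\rangle\dd\mu=0$. The vdC property then gives
\[
\int_{U(d)}\langle W\textstyle\int_X V\dd\mu,w\rangle\dd W=0,
\]
which is automatically zero, so this is useless: averaging over $W$ loses the mean.

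The fix is to average only inside the correlation, not the mean. I would use the finite group of signed coordinate permutations, or simpler: work with $X\times\{1,\dots,d\}$ but define $F(x,j)=\sqrt d\,V_{\sigma(j)}(x)$ multiplied by characters. The cleanest route is as follows. Take $Y=X\times \mathbb{T}^d$ with Haar measure and trivial action on $\mathbb{T}^d$, and set
\[
F(x,\theta)=\sum_j \theta_j V_j(x)+\lambda\,\mathbbm{1},
\]
where $\mathbbm{1}$ comes from an auxiliary factor. This is getting complicated, so the main obstacle is precisely finding a scalar function whose correlations vanish while its mean detects an arbitrary coordinate.

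The first approach I would try is the direct one: for each $k$, set $F(x,j)=\sqrt d\,V_{\tau_k(j)}(x)$, which only permutes coordinates and again yields the sum. Instead, use $F(x,j)=\sqrt d\,\omega^{jk}V_j(x)$ with $\omega=e^{2\pi i/d}$. The phase cancels in $F(S_hy)\overline{F(y)}$ because the action fixes $j$, so the correlations are still $\int\langle V(T_hx),V(x)\rangle=0$. The mean becomes $d^{-1/2}\sum_j\omega^{jk}\int V_j\dd\mu=0$ for every $k=0,\dots,d-1$. Invertibility of the discrete Fourier matrix then yields $\int_X V_j\dd\mu=0$ for all $j$.

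Finally, we check that $(Y,\mu\otimes u,S)$ is an m.p.s.\ on a standard space, which is clear. Boundedness of $F$ follows from boundedness of $V$.
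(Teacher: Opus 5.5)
Your final construction, $F_k(x,j)=\sqrt d\,\omega^{jk}V_j(x)$ on $X\times\{1,\dots,d\}$ with the trivial action on the second factor, is a correct and complete proof. The unimodular phases cancel in $F_k(S_hy)\overline{F_k(y)}$, so each $F_k$ has vanishing correlations on $H$. The vdC property then gives $\sum_j\omega^{jk}\int_XV_j\dd\mu=0$ for $k=0,\dots,d-1$. Finally, the Vandermonde matrix in the $d$ distinct nodes $\omega,\omega^2,\dots,\omega^d$ is invertible.

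The paper uses the same skeleton: a finite auxiliary probability space with trivial action, and the scalar function $q(x,\xi)=\xi^*V(x)$ for a random vector $\xi$ with $\E(\xi\xi^*)=I_d$. Then the correlations of $q$ are exactly $\int_X\langle V(T_hx),V(x)\rangle\dd\mu$, while $\int q=\langle\int_XV\dd\mu,\E\xi\rangle$. Your $F_k$ is the case $\xi=\sqrt d\,\overline{\omega^{jk}}e_j$ with $j$ uniform; as $k$ varies, the means $\E\xi$ run through the discrete Fourier orthonormal basis.

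The difference is in how the mean is detected. The paper argues by contradiction: it rescales and rotates so that $\int_XV\dd\mu=e_1$. It then uses a single $\xi$, uniform on the $2(d-1)$ points $e_1\pm\sqrt{d-1}\,e_j$, whose mean is exactly $e_1$, so one auxiliary function already has integral $1$.

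Your route avoids the normalization and the tailored distribution. It proves the conclusion directly by testing $\int_XV\dd\mu$ against a full orthonormal basis, at the price of $d$ applications of the vdC property plus a linear-algebra inversion. The paper's route packages the failure into one explicit witness, which is what \Cref{lem:sign-witness} reuses. For real $V$ and $d=2$ your version also serves there: $\omega=-1$ gives real functions, and $\int_X(V_1+V_2)\dd\mu$ and $\int_X(V_2-V_1)\dd\mu$ cannot both vanish unless $\int_XV\dd\mu=0$.

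In a final write-up you should drop the exploratory first attempts (uniform coordinate, Haar-random unitary, torus) and present only the Fourier argument.
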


\begin{proof}
The case $d=1$ is precisely \Cref{def:vdc}, so assume that $d\geq2$.
We argue by contradiction.
Let $(e_1,\ldots,e_d)$ be an orthonormal basis of $\C^d$.
After multiplying $V$ by a nonzero scalar and composing it with a unitary transformation, we may assume that
\begin{equation*}
\int_XV\dd\mu=e_1.
\end{equation*}
Let $\xi$ be a random vector uniformly distributed on the finite set with $2(d-1)$ elements,
\begin{equation*}
Y=\left\{e_1\pm\sqrt{d-1}\,e_j:j=2,\ldots,d\right\}\subseteq\mathbb{C}^d.
\end{equation*}
By direct computation we verify that the means of the random variables $\xi$ and $\xi\xi^*$ are
\begin{equation*}
\E\xi=e_1
\qquad\text{and}\qquad
\E(\xi\xi^*)=I_d,
\end{equation*}
where $I_d$ is the $d\times d$ identity matrix. Consider $\Omega=X\times Y$ with the product probability measure $\mu_\Omega$ and the action $S_g=T_g\times\operatorname{Id}_Y$.
Define a map $q:\Omega\to\mathbb{C}$ by
\begin{equation*}
q(x,\xi)=\xi^*V(x)=\langle V(x),\xi\rangle.
\end{equation*}
For every $h\in H$, we have
\begin{align*}
\int_\Omega q(S_h(x,\xi))\overline{q(x,\xi)}\dd\mu_\Omega(x,\xi)
&=\int_X\E\left(\xi^*V(T_hx)V(x)^*\xi\right)\dd\mu(x)\\
&=\int_XV(x)^*\E(\xi\xi^*)V(T_hx)\dd\mu(x)\\
&=\int_X\left\langle V(T_hx),V(x)\right\rangle\dd\mu(x)=0.
\end{align*}
On the other hand,
\begin{equation*}
\int_\Omega q\dd\mu_\Omega
=(\E\xi)^*\int_XV\dd\mu=e_1^*e_1=1,
\end{equation*}
contradicting the vdC property of $H$.
\end{proof}

We shall also need the following convenient form of a witness to the failure of the vdC property.
The next lemma is a measure-theoretic version of \cite[Proposition~7.14]{SaulInverseFurstenberg}; in this formulation, no amenability assumption on $G$ is required.

\begin{lemma}[Sign-valued witness]\label{lem:sign-witness}%ChatGPT suggests shortening the label to fix some overflow error
If $H$ is not vdC in $G$, then there exist an m.p.s.
$(X,\mathscr{B},\mu,(T_g)_{g\in G})$ and a measurable function
$s\colon X\to\{-1,1\}$ such that
\begin{equation*}
\int_Xs(T_hx)s(x)\dd\mu(x)=0\textup{ for all }h\in H,\textup{ but }
\int_Xs\dd\mu\neq0.
\end{equation*}
\end{lemma}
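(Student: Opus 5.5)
The plan is to start from an arbitrary bounded complex-valued witness to the failure of the vdC property and improve it in two steps. First we make it real-valued. Then we "randomize" it into a $\{-1,1\}$-valued function using an auxiliary Bernoulli factor, as in the proof of \Cref{CorrFunctionToCorrSet}.

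\emph{Step 1: a complex witness.} Since $H$ is not vdC, there is an m.p.s. $(X,\mathscr{B},\mu,(T_g)_{g\in G})$ and a bounded $f$ with $\int_X f(T_hx)\overline{f(x)}\dd\mu(x)=0$ for all $h\in H$ and $\int_X f\dd\mu\neq0$. By the remark after \Cref{def:vdc}, $e\notin H$. Multiplying $f$ by a unimodular constant changes neither property, so we may assume that $\int_X f\dd\mu$ is real and positive.

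\emph{Step 2: reduction to a real-valued witness.} We use a finite random-vector trick in the spirit of \Cref{lem:FinDimCharvdC}. Let $\xi$ be uniform on $\{1+i,1-i\}$, so that
\[
\E\xi=1,\qquad \E\xi^2=0,\qquad \E|\xi|^2=2.
\]
On $X\times\{1+i,1-i\}$ with the action $T_g\times\operatorname{Id}$, set $q(x,\xi)=\operatorname{Re}\bigl(\overline{\xi}f(x)\bigr)$. Expand
\[
q(S_h\cdot)\,q=\tfrac14\bigl(\overline\xi f\circ T_h+\xi\overline{f\circ T_h}\bigr)\bigl(\overline\xi f+\xi\overline f\bigr).
\]
The terms carrying $\xi^2$ or $\overline\xi^{\,2}$ vanish after taking expectation. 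What remains is
\[
\int q(S_h\cdot)\,q=\operatorname{Re}\int_X f(T_hx)\overline{f(x)}\dd\mu(x)=0 .
\]
Meanwhile $\int q=\operatorname{Re}\int_X f\dd\mu\neq0$. After rescaling, $q$ is real-valued with values in $[-1,1]$, has vanishing correlations along $H$, and has nonzero mean.

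\emph{Step 3: randomizing to signs.} Rename the system from Step 2 as $(X,\mu,T)$ and the function as $q$. Take the product with the right Bernoulli shift on $[0,1]^G$, exactly as in \Cref{CorrFunctionToCorrSet}. Define
\[
s(x,t)=\begin{cases}1, & t_e\le \tfrac{1+q(x)}{2},\\ -1, & \text{otherwise.}\end{cases}
\]
Then $\E[s\mid x]=q(x)$, so $\int s=\int q\neq0$. For $h\in H$ we have $h\neq e$, so the coordinates $t_e$ and $t_h$ are independent. Conditioning on $x$, Fubini therefore gives
\[
\int s(S_h\cdot)\,s=\int_X q(T_hx)q(x)\dd\mu(x)=0.
\]

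The only delicate point is Step 2: taking the real part of $f$ naively creates cross terms $\langle T_hf,\overline f\rangle$, which need not vanish. The choice of a random phase with $\E\xi^2=0$ but $\E\xi\neq0$ is precisely what kills these cross terms while keeping the mean nonzero. Step 3 is routine, given that $e\notin H$.
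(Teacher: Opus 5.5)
Your proof is correct and is essentially the paper's argument. Your random phase $\xi$, uniform on $\{1+i,1-i\}$ and applied through $\operatorname{Re}(\overline{\xi}f)$, is exactly the paper's use of the proof of \Cref{lem:FinDimCharvdC} with $V=(\operatorname{Re}f,\operatorname{Im}f)$ and $\xi$ uniform on $\{e_1\pm e_2\}$ under $\mathbb{C}\cong\mathbb{R}^2$, and your Step 3 is the same Bernoulli randomization up to reparametrizing $[-1,1]$ as $[0,1]$.
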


\begin{proof}
By the failure of \Cref{def:vdc}, there exist an m.p.s. and a bounded complex-valued function $f$ whose correlations vanish on $H$ but whose integral is nonzero.
Set
\begin{equation*}
V=(\operatorname{Re}f,\operatorname{Im}f)\colon X\to\R^2\subseteq\mathbb{C}^2.
\end{equation*}
Then, for every $h\in H$,
\begin{align*}
\int_X\langle V(T_hx),V(x)\rangle\dd\mu(x)
&=\operatorname{Re}\int_Xf(T_hx)\overline{f(x)}\dd\mu(x)=0,
\end{align*}
whereas
\begin{equation*}
\int_XV\dd\mu
=\left(\operatorname{Re}\int_Xf\dd\mu,
        \operatorname{Im}\int_Xf\dd\mu\right)\neq(0,0).
\end{equation*}
Since $V$ and its integral are real-valued, the argument in the proof of \Cref{lem:FinDimCharvdC} produces, on a two-point extension of the system, a bounded real-valued function $u$ with zero correlations on $H$ and nonzero integral.
After rescaling, we may assume that $u$ takes values in $[-1,1]$.

Let $([-1,1]^G,m^G,(\Phi_g)_{g\in G})$ be the right Bernoulli shift, where $m$ is normalized Lebesgue measure on $[-1,1]$ and
$(\Phi_gt)_k=t_{kg}$.  On the product space $X\times[-1,1]^G$ define
\begin{equation*}
s(x,t)=
\begin{cases}
1,&t_e\leq u(x),\\
-1,&t_e>u(x).
\end{cases}
\end{equation*}
For each $x\in X$ and $h\in H$, we have
\begin{equation*}
\int_{[-1,1]^G}s\left(T_hx,(t_{gh})_{g\in G}\right)\dd m^G(t)
=u(T_hx).
\end{equation*}
Since $H$ is not vdC in $G$, the preceding observation shows that $e\notin
H$.  Thus the coordinates $t_e$ and $t_h$ are independent for every
$h\in H$.  Therefore,
\begin{align*}
\int s\left(T_hx,(t_{gh})_{g\in G}\right)s(x,t)\dd(\mu\times m^G)(x,t)&=\int_Xu(T_hx)u(x)\dd\mu(x)=0,\\
\int s(x,t)\dd(\mu\times m^G)(x,t)&=\int_Xu(x)\dd\mu(x)\neq0\qedhere
\end{align*}
\end{proof}

%\subsection{\texorpdfstring{$(K,G)$}{(K,G)}-vdC sets}
\subsection{Measure-theoretic \texorpdfstring{$K$}{K}-vdC sets}

For the rest of this section, let $K$ be a nontrivial compact metrizable group, equipped with its Borel
$\sigma$-algebra, and let $m_K$ denote its normalized Haar measure.
The definition of $(K,G)$-vdC below is the measure-theoretic analogue of the sequence definition from the introduction: 
Haar distribution of $F(T_hx)F(x)^{-1}$ replaces uniform distribution of $(x_{hg}x_g^{-1})_{g\in G}$.

\begin{definition}
We say that $H$ is \emph{measure-theoretic $(K,G)$-vdC} if the following
holds: whenever $(X,\mathscr{B},\mu,(T_g)_{g\in G})$ is an m.p.s.,
$F\colon X\to K$ is a Borel measurable map, and
\begin{equation*}
x\longmapsto F(T_hx)F(x)^{-1}
\end{equation*}
is Haar distributed for every $h\in H$, the map $F$ itself is Haar
distributed, i.e., $F_*\mu = m_K$.
\end{definition}

The following characterization allows us to identify Haar measure through the vanishing of matrix coefficients of finite-dimensional unitary representations.

\begin{proposition}[Fourier characterization of Haar measure]
\label{prop:HaarFourierCharacterization}
Let $K$ be a compact metrizable group, let $m_K$ be its normalized Haar
measure, and let $\nu$ be a Borel probability measure on $K$.
Then $\nu=m_K$ if and only if
\begin{equation}
\label{eq:HaarFourierCharacterization}
\int_K\langle\xi,U(k)\eta\rangle\dd\nu(k)=0
\end{equation}
for every $d\in\mathbb{N}$, $\xi,\eta\in\C^d$ and every nontrivial irreducible continuous unitary representation
$U\colon K\to\mathcal U(\C^d)$.
\end{proposition}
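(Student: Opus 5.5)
The forward direction is a direct computation with the left-invariance of $m_K$. Fix a nontrivial irreducible continuous unitary representation $U$ on $\C^d$ and vectors $\xi,\eta$. For every $k_0\in K$ we have
\begin{equation*}
\int_K U(k)\dd m_K(k)=\int_K U(k_0k)\dd m_K(k)=U(k_0)\int_K U(k)\dd m_K(k).
\end{equation*}
Hence the range of the operator $Q=\int_K U\dd m_K$ consists of $U$-invariant vectors. Since $U$ is irreducible and nontrivial, it has no nonzero invariant vectors. (A nonzero invariant vector would span an invariant line, so by irreducibility $d=1$ and $U$ would be trivial.) Therefore $Q=0$, and so $\int_K\langle\xi,U(k)\eta\rangle\dd m_K(k)=\langle\xi,Q\eta\rangle=0$.

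For the converse, I would invoke the Peter--Weyl theorem. It says the linear span of the matrix coefficients $k\mapsto\langle U(k)\eta,\xi\rangle$ of finite-dimensional irreducible continuous unitary representations, including the trivial one, is uniformly dense in $C(K)$. Condition \eqref{eq:HaarFourierCharacterization} says every nontrivial coefficient has $\nu$-integral $0$; complex conjugation is harmless here, since $\overline{\langle\xi,U(k)\eta\rangle}=\langle U(k)\eta,\xi\rangle$. By the first paragraph these coefficients also have $m_K$-integral $0$. Constants have the same integral against both probability measures. So $\int\phi\dd\nu=\int\phi\dd m_K$ for all $\phi$ in a dense subspace of $C(K)$. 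Both measures are finite, so this identity passes to all of $C(K)$ by uniform approximation. The Riesz representation theorem, which applies because $K$ is compact metrizable and hence all Borel probability measures are Radon, then gives $\nu=m_K$.

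The only substantive input is the density statement from Peter--Weyl. I would cite it rather than reprove it, since that is the one step that is not routine. Everything else is bookkeeping: matching the paper's convention, in which the inner product is linear in the first variable, against the usual form of the matrix coefficients, which amounts to the conjugation remark above.
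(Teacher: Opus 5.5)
Your proof is correct and follows the same route as the paper for the converse: Peter--Weyl density of matrix coefficients, agreement of $\nu$ and $m_K$ on $C(K)$, and regularity of Borel measures on a compact metric space. The only difference is the forward direction, where you prove directly, via the invariance $Q=U(k_0)Q$ of $Q=\int_K U\dd m_K$, that coefficients of nontrivial irreducible representations are orthogonal to constants; the paper instead cites this fact from the Schur orthogonality relations.
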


\begin{proof}
If $\nu=m_K$, \eqref{eq:HaarFourierCharacterization} follows from the
orthogonality of the coefficients of $U$ to the trivial representation
\cite[Theorem~5.8]{Folland}.  Conversely, suppose that
\eqref{eq:HaarFourierCharacterization} holds.  By the Peter--Weyl theorem (see \cite[Theorem~5.12]{Folland}),
the constant function $1$, together with the coefficient functions $
k\longmapsto\langle\xi,U(k)\eta\rangle$
appearing in \eqref{eq:HaarFourierCharacterization}, has uniformly dense
linear span in $C(K)$.  It follows that every
continuous function $f\colon K\to\mathbb{C}$ has the same integral with
respect to $\nu$ and $m_K$.  Since both measures are regular, this implies
that $\nu=m_K$.
\end{proof}

\begin{theorem}\label{thm:equivalenceForCompactvdC}
For every countably infinite group $G$ and every nontrivial compact metrizable group
$K$,
\begin{equation*}
H\text{ is vdC in }G
\quad\Longleftrightarrow\quad
H\text{ is measure-theoretic }(K,G)\text{-vdC}.
\end{equation*}
\end{theorem}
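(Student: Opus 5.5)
Both directions will be proved separately. For ($\Rightarrow$), assume $H$ is vdC in $G$. Let $F\colon X\to K$ be Borel and suppose $F(T_hx)F(x)^{-1}$ is Haar distributed for every $h\in H$. By \Cref{prop:HaarFourierCharacterization} it suffices to show that $\int_X\langle \xi,U(F(x))\eta\rangle\dd\mu=0$ for every nontrivial irreducible $U\colon K\to\mathcal U(\C^d)$ and all $\xi,\eta\in\C^d$. Fix such $U$ and $\eta$, and set $V(x)=U(F(x))\eta$, a bounded measurable map into $\C^d$. Then
\begin{equation*}
\langle V(T_hx),V(x)\rangle=\langle U(F(T_hx)F(x)^{-1})U(F(x))\eta,U(F(x))\eta\rangle .
\end{equation*}
The Haar distribution of $F(T_hx)F(x)^{-1}$ alone does not kill this, because the argument is coupled with $F(x)$. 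So I will instead use the whole matrix-valued function: take $V$ to be $x\mapsto U(F(x))$, viewed as a vector in $\C^{d^2}$ with the Hilbert--Schmidt inner product. Then
\begin{equation*}
\langle V(T_hx),V(x)\rangle_{HS}=\operatorname{tr}\bigl(U(F(x))^*U(F(T_hx))\bigr)=\operatorname{tr}U\bigl(F(T_hx)F(x)^{-1}\bigr),
\end{equation*}
using conjugation invariance of the trace. This is the character $\chi_U$ evaluated at a Haar-distributed element, so its integral is $\int_K\chi_U\dd m_K=0$ because $U$ is nontrivial and irreducible. \Cref{lem:FinDimCharvdC} then gives $\int_X U(F(x))\dd\mu=0$, which yields all the required coefficient integrals.

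For ($\Leftarrow$), I argue by contraposition. Suppose $H$ is not vdC. \Cref{lem:sign-witness} provides $s\colon X\to\{-1,1\}$ with $\int s(T_hx)s(x)\dd\mu=0$ for all $h\in H$ and $\int s\dd\mu\neq0$. The idea is to encode $s$ as a $K$-valued map with a random factor. Since $K$ is nontrivial, I choose a nontrivial irreducible $U$ and an element; a simpler route would use a Haar-random factor. I will work on $X\times K$ with the measure $\mu\times m_K$ and $G$ acting trivially on the $K$ factor. The resulting $F$ should satisfy two properties: $F(T_hx)F(x)^{-1}$ is Haar whenever $s(T_hx)s(x)$ has mean zero, and $F$ is not Haar when $\int s\neq0$.

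The natural attempt is to let $F(x,k)=k$ if $s(x)=1$, and otherwise a modification such as $F(x,k)=\psi(k)$. Then $F(T_hx)F(x)^{-1}$ equals $kk^{-1}=e$ when the two signs agree. That is bad, so the random factor must vary with the point, not only with the sign. Instead I will use a Bernoulli-type randomization, as in \Cref{lem:sign-witness}: take $Y=X\times K^G$ with independent Haar coordinates and right shift, and set $F(x,t)=t_e$ when $s(x)=1$ and $F(x,t)=e$ when $s(x)=-1$. Since $e\notin H$, the coordinates $t_e$ and $t_h$ are independent, so $F(T_h\cdot)F^{-1}$ is Haar unless both signs equal $-1$, in which case it equals $e$. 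Consequently its law is $(1-p_h)m_K+p_h\delta_e$, where $p_h=\mu(s(T_hx)=s(x)=-1)$. This again fails to be Haar.

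The main obstacle is this step: designing the encoding so that the vanishing of the correlation $\int s(T_h)s=0$, which says that $\mu(\text{signs agree})=\tfrac12$, translates exactly into Haar distribution. I would try first an encoding through an element $k_0\neq e$ and a Haar-mixing in which the agreeing and disagreeing events contribute complementary measures. If that cannot be made exact, the fallback is to note that the converse direction presumably only needs a \emph{different} witness: I would strengthen \Cref{lem:sign-witness} to a witness for which $s(T_hx)s(x)$ is a fair sign independent of a Haar coordinate, and then multiply by that coordinate.
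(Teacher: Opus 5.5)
Your proof of $(\Rightarrow)$ is correct. It differs only cosmetically from the paper's. The paper applies \Cref{lem:FinDimCharvdC} in $\C^d$ to $V(x)=U(F(x)^{-1})\xi$, whose correlations $\langle U(F(x)F(T_hx)^{-1})\xi,\xi\rangle$ depend only on the inverse of the Haar-distributed increment. You work in $\C^{d^2}$ with the Hilbert--Schmidt inner product and use $\int_K\chi_U\dd m_K=0$.

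The genuine gap is $(\Leftarrow)$: you never produce a working construction. Both encodings you write down fail, as you note, and your Bernoulli attempt shows why. Given $x$, the law of $F$ is $\beta+s(x)\gamma$ with $\beta=\frac12(m_K+\delta_e)$ and $\gamma=\frac12(m_K-\delta_e)$. This is linear in $s(x)$ but centered at $\beta\neq m_K$. So in the convolution computing the law of $F(S_hy)F(y)^{-1}$, the constant term $\beta*\beta$ is not Haar and the cross terms $\beta*\gamma$ do not vanish. The fallback of multiplying by a Haar coordinate does not help either: if that coordinate is independent of the rest, $F$ itself becomes Haar distributed and witnesses nothing.

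The missing idea is to center the conditional law at Haar measure, with a mean-zero perturbation that is linear in the sign.
\begin{itemize}
\item Choose a Borel $\psi\colon K\to[-1,1]$ with $\int_K\psi\dd m_K=0$ and $\psi\not\equiv0$. For instance, take $\psi=\mathbbm{1}_B-m_K(B)$ with $0<m_K(B)<1$, which exists because $K$ is nontrivial.
\item Put $\kappa_\sigma=(1+\sigma\psi)\dd m_K$ for $\sigma=\pm1$.
\item On $X\times K^G$ take $\nu=\int_X\nu_x\dd\mu(x)$ with $\nu_x=\bigotimes_{g\in G}\kappa_{s(T_gx)}$.
\item Use the action $S_h(x,\mathbf k)=(T_hx,(k_{gh})_{g\in G})$. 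It preserves $\nu$ because the shifted fiber measure is $\nu_{T_hx}$.
\item Set $F(x,\mathbf k)=k_e$.
\end{itemize}
For $h\in H$ we have $h\neq e$, so given $x$ the coordinates $k_h$ and $k_e$ are independent with densities $1+s(T_hx)\psi$ and $1+s(x)\psi$. Hence $k_hk_e^{-1}$ has density $(1+s(T_hx)\psi)*(1+s(x)\check\psi)=1+s(T_hx)s(x)\,\psi*\check\psi$, since convolving a mean-zero function with the Haar density $1$ gives $0$. Integrating in $x$ gives density $1$ precisely because $\int_Xs(T_hx)s(x)\dd\mu=0$. Meanwhile $F_*\nu$ has density $1+\bigl(\int_Xs\dd\mu\bigr)\psi\not\equiv1$.

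You could also stay within your i.i.d.\ framework: take $t\in[0,1]^G$ uniform and $F(x,t)=\Theta_{s(x)}(t_e)$, with Borel maps $\Theta_\sigma$ pushing Lebesgue measure to $\kappa_\sigma$. What matters is only that the two conditional laws are $m_K\pm\gamma$ with $\gamma(K)=0$.
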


\begin{proof}
Suppose first that $H$ is vdC in $G$.  Let
$(X,\mathscr{B},\mu,(T_g)_{g\in G})$ be an m.p.s.,
and let $F\colon X\to K$ be measurable.  Assume that $
F(T_hx)F(x)^{-1}$
is Haar distributed for every $h\in H$. 

Let $U\colon K\to\mathcal{U}(\C^d)$ be a nontrivial irreducible continuous
unitary representation.  For every $\xi\in\C^d$ and $h\in H$, we have
\begin{align*}
\int_X\left\langle
U(F(T_hx)^{-1})\xi,U(F(x)^{-1})\xi
\right\rangle\dd\mu(x)
&=\int_X\left\langle
U\left(F(x)F(T_hx)^{-1}\right)\xi,\xi
\right\rangle\dd\mu(x)\\
&=\int_K\langle U(k^{-1})\xi,\xi\rangle\dd m_K(k)\\
&=\int_K\langle U(k)\xi,\xi\rangle\dd m_K(k)=0.
\end{align*}
Here we used the fact that the inverse of a Haar-distributed random variable
is also Haar distributed.
By the finite-dimensional vdC
condition, \Cref{lem:FinDimCharvdC},
\begin{equation*}
\int_XU(F(x)^{-1})\xi\dd\mu(x)=0.
\end{equation*}
Consequently, for every $\xi,\eta\in\C^d$,
\begin{equation*}
\int_X\langle\xi,U(F(x))\eta\rangle\dd\mu(x)
=\int_X\langle U(F(x)^{-1})\xi,\eta\rangle\dd\mu(x)=\left\langle\int_XU(F(x)^{-1})\xi\dd\mu(x),\eta\right\rangle=0.
\end{equation*}
Consequently, by \Cref{prop:HaarFourierCharacterization}, the distribution of $F$ is
$m_K$.  Hence $H$ is measure-theoretic $(K,G)$-vdC.

Conversely, suppose that $H$ is not vdC in $G$.  By
\Cref{lem:sign-witness}, there exist an m.p.s.
\begin{equation*}
(X,\mathscr{B},\mu,(T_g)_{g\in G})
\end{equation*}
and a measurable $s\colon X\to\{-1,1\}$ such that
\begin{equation}\label{eq:sign-witness}
\int_Xs(T_hx)s(x)\dd\mu(x)=0\textup{ for all }h\in H,\textup{ but }
m:=\int_Xs\dd\mu\neq0.
\end{equation}

Using that $K$ is nontrivial, we choose a Borel measurable function $\psi\colon K\to[-1,1]$ which
is not $m_K$-almost everywhere zero and satisfies
\begin{equation*}
\int_K\psi\dd m_K=0.
\end{equation*}
 For
$\sigma\in\{-1,1\}$, define a probability measure $\kappa_\sigma$ on $K$ by
\begin{equation}\label{eq:kappa}
\dd\kappa_\sigma=(1+\sigma\psi)\dd m_K.
\end{equation}

Set $Y=X\times K^G$.  For each $x\in X$, we define a probability measure $\nu_x$ on $K^G$ by
\begin{equation}\label{eq:fiber-measure}
\nu_x=\bigotimes_{g\in G}\kappa_{s(T_gx)},
\end{equation}
which exists because $G$ is countable.  For each cylinder
set $C\subseteq K^G$, the map $x\mapsto\nu_x(C)$ is measurable; a
monotone-class argument gives the same conclusion for every Borel set in $K^G$.
Thus $x\mapsto\nu_x$ is a probability kernel from $X$ to $K^G$.
Applying \cite[Theorem~6.11]{Cinlar} with $E=X$, $F=K^G$, and kernel
$x\mapsto\nu_x$, we obtain a probability measure $\nu$ on $Y$ characterized
by
\begin{equation}\label{eq:nu}
\int_YA\dd\nu
=\int_X\int_{K^G}A(x,\mathbf{k})\dd\nu_x(\mathbf{k})\dd\mu(x)
\end{equation}
for every bounded measurable function $A$ on $Y$.

For $h\in G$, define
\begin{equation*}
R_h((k_g)_{g\in G})=(k_{gh})_{g\in G}
\end{equation*}
on $K^G$, and define
\begin{equation*}
S_h\left(x,(k_g)_{g\in G}\right)
=\left(T_hx,R_h((k_g)_{g\in G})\right).
\end{equation*}
Since $R_gR_h=R_{gh}$, the maps $(S_h)_{h\in G}$ form a $G$-action.
Moreover, $R_h$ merely permutes the product coordinates, so
$(R_h)_*\nu_x$ is a product measure.  Its $g$-th marginal is
$\kappa_{s(T_{gh}x)}=\kappa_{s(T_gT_hx)}$; hence
\begin{equation*}
(R_h)_*\nu_x=\nu_{T_hx}.
\end{equation*}
Therefore, for every bounded measurable function $A$ on $Y$,
\begin{align*}
\int_YA\circ S_h\dd\nu
&=\int_X\int_{K^G}A(T_hx,R_h\mathbf{k})
  \dd\nu_x(\mathbf{k})\dd\mu(x)\\
&=\int_X\int_{K^G}A(T_hx,\mathbf{k})
  \dd\nu_{T_hx}(\mathbf{k})\dd\mu(x)\\
&=\int_X\int_{K^G}A(x,\mathbf{k})
  \dd\nu_x(\mathbf{k})\dd\mu(x)\\
&=\int_YA\dd\nu.
\end{align*}
The third equality follows from the $T_h$-invariance of $\mu$.
Thus $(Y,\nu,(S_h)_{h\in G})$ is an m.p.s.

Define
\begin{equation*}
F\colon Y\to K,
\qquad
F\left(x,(k_g)_{g\in G}\right)=k_e.
\end{equation*}
It remains to check that $F(S_hy)F(y)^{-1}$ is Haar distributed in $K$ for
every $h\in H$, but $F(y)$ is not Haar distributed in $K$.

For bounded Borel measurable functions $a,b\colon K\to\mathbb C$, write
\begin{equation}\label{eq:convolution}
(a*b)(r)=\int_Ka(rv^{-1})b(v)\dd m_K(v),
\qquad
\check{a}(u)=a(u^{-1}).
\end{equation}
If independent $K$-valued random variables $A$ and $B$ have densities $a$
and $b$, respectively, then $AB$ has density $a*b$, while $A^{-1}$ has
density $\check{a}$.

Put $f_\sigma=1+\sigma\psi$, for $\sigma\in\{-1,1\}$.  Since $\psi$ has Haar mean zero,
\begin{equation}\label{eq:haar-annihilates}
1*1=1,
\qquad
1*\check{\psi}=\psi*1=0.
\end{equation}
Fix $h\in H$.  Since $H$ is not vdC in $G$, we have $e\notin H$, and hence
$h\neq e$.  For fixed $x\in X$, consider the distribution of
$F(S_hy)F(y)^{-1}$ when the $K^G$-coordinate of
$y\in\{x\}\times K^G$ has distribution $\nu_x$.  The variables $k_h$ and
$k_e$ are independent, with densities $f_{s(T_hx)}$ and $f_{s(x)}$.
Therefore, the conditional density of $
F(S_hy)F(y)^{-1}=k_hk_e^{-1}$
is
\begin{align*}
f_{s(T_hx)}*\check{f}_{s(x)}
&=(1+s(T_hx)\psi)*(1+s(x)\check{\psi})\\
&=1+s(T_hx)s(x)(\psi*\check{\psi}).
\end{align*}
After integrating over $x$, \eqref{eq:sign-witness} shows that the density
of $F(S_hy)F(y)^{-1}$ is identically $1$.  Thus $F(S_hy)F(y)^{-1}$ is Haar
distributed for every $h\in H$.

On the other hand, by \eqref{eq:sign-witness}, the marginal distribution
$F_*\nu$ has density
\begin{equation*}
\int_Xf_{s(x)}\dd\mu(x)=1+m\psi\not\equiv1.
\end{equation*}
Hence $F$ is not Haar distributed, which completes the proof.
\end{proof}

%%%%%%%%%%%%%%%%%%%%%%%%%%%%%%%%%%%%%%%%%%%%%%%%%%%%%%%%%%%%%%%%%%%%%%%%%%%%%%%%%%%%%%%%%%%%%%%%%%%%%%%%%%%%%%%%%%%%%%%%%%%%%%%%%%%%%%%%%%
\subsection{\texorpdfstring{$K$}{K}-vdC sets via uniform distribution}

The original definition of $K$-vdC sets was given in terms of uniform distribution of sequences in $K$.
In this subsection we extend this definition to the setting of countably infinite amenable groups, and then show that it is the same as the measure-theoretic definition of the previous subsection.

\begin{definition}\label{def:Sequence(KG)-vdC}
    Let $G$ be a countably infinite amenable group and $\mathcal{F} = (F_n)_{n = 1}^\infty$ a left-F\o lner sequence in $G$.
    A sequence $(x_g)_{g \in G}$ taking values in $K$ is \textbf{$\mathcal{F}$-uniformly distributed (u.d.)} if for every $f \in C(K)$ we have

    \begin{equation*}
        \lim_{n\rightarrow\infty}\frac{1}{|F_n|}\sum_{g \in F_n}f(x_g) = \int_Kf\dd m_K.
    \end{equation*}
    We say that $H$ is \textbf{$(K,G,\mathcal{F})$-vdC} if for every sequence $(x_g)_{g \in G}$ taking values in $K$ for which $(x_{hg}x_g^{-1})_{g \in G}$ is $\mathcal{F}$-u.d. for every $h \in H$, we have that $(x_g)_{g \in G}$ is itself $\mathcal{F}$-u.d.
\end{definition}

\begin{theorem}\label{thm:Sequence(KG)vdCIsMeasureTheoretic(KG)vdC}
    Let $G$ be a countably infinite amenable group and $\mathcal{F} = (F_n)_{n = 1}^\infty$ a left-F\o lner sequence in $G$.
    A set $H \subseteq G$ is $(K,G,\mathcal{F})$-vdC if and only if it is measure-theoretic $(K,G)$-vdC.
\end{theorem}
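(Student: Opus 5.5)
Both directions go through \Cref{thm:equivalenceForCompactvdC}, so it suffices to show that $H$ is $(K,G,\mathcal F)$-vdC if and only if $H$ is vdC in $G$. We use that, for amenable $G$, vdC in $G$ is equivalent to the F\o lner-sequence version of the complex-valued vdC property along $\mathcal F$ (\cite[Theorem~1.12]{SaulInverseFurstenberg}, \cite[Theorem~1.1]{FTD}).

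\emph{Measure-theoretic $\Rightarrow$ sequence.} Suppose $H$ is measure-theoretic $(K,G)$-vdC, and let $(x_g)$ be a $K$-valued sequence with $(x_{hg}x_g^{-1})_g$ $\mathcal F$-u.d.\ for every $h\in H$. Assume for contradiction that $(x_g)$ is not $\mathcal F$-u.d.
\begin{enumerate}[label=(\roman*)]
\item Pass to a subsequence $(F_{n_j})$ along which the empirical measures of $(x_g)$ converge to some $\nu\neq m_K$. Along a further subsequence (diagonal argument over a countable dense subset of $C(K^G)$), obtain a Furstenberg-type correspondence: the point $\omega=(x_g)_{g\in G}\in K^G$ has averages $\frac1{|F_{n_j}|}\sum_{g\in F_{n_j}}\delta_{R_g\omega}$ converging weak-* to a measure $\lambda$ on $K^G$.
\item Here $R_h$ is the shift $(R_h\mathbf k)_g=k_{gh}$. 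Since the averages are taken over a left-F\o lner sequence, the orbit must be arranged so that $\lambda$ is invariant under the relevant action. This means matching the indexing $x_{hg}$ with a left or right shift, possibly replacing $x_g$ by $x_{g^{-1}}$ or adjusting the convention; I expect this to need care but to be routine.
\item Take $F(\mathbf k)=k_e$. Then $F_*\lambda=\nu\neq m_K$.
\item For each $h\in H$, the map $F(S_h\mathbf k)F(\mathbf k)^{-1}$ has distribution equal to the limit of the empirical measures of $(x_{hg}x_g^{-1})$, which is $m_K$ by hypothesis. This uses that $\mathbf k\mapsto k_hk_e^{-1}$ is continuous, so integrals of continuous functions pass to the weak-* limit.
\end{enumerate}
This contradicts the measure-theoretic $(K,G)$-vdC property.

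\emph{Sequence $\Rightarrow$ vdC in $G$.} Suppose $H$ is not vdC in $G$. Reuse the construction in the proof of \Cref{thm:equivalenceForCompactvdC}: it produces an m.p.s.\ $(Y,\nu,S)$ and $F\colon Y\to K$ with $F(S_hy)F(y)^{-1}$ Haar distributed for $h\in H$ but $F$ not Haar distributed. The task is to convert this into a sequence.
\begin{enumerate}[label=(\roman*)]
\item Apply the pointwise ergodic theorem along a tempered subsequence of $\mathcal F$ to $\mathcal F$-generic points. The obstacles are that it only holds along a subsequence (Lindenstrauss), and that it yields ergodic averages rather than the averages of the original sequence of interest.
\item To avoid these, build the sequence directly. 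By the (F\o lner) vdC characterization there is a bounded sequence $(s_g)$ with $\mathcal F$-correlations vanishing on $H$ but nonzero $\mathcal F$-mean. Using the $\mathbb Z/2$ reduction of \cite[Proposition~7.14]{SaulInverseFurstenberg}, take $s_g\in\{-1,1\}$.
\item Choose $x_g\in K$ independently with law $\kappa_{s_g}$, exactly mirroring the kernel $\nu_x$.
\item For fixed $h$ and $f\in C(K)$, the averages $\frac1{|F_n|}\sum_{g\in F_n} f(x_{hg}x_g^{-1})$ have expectation equal to
\[
\int f\,\bigl(1+(\text{average of } s_{hg}s_g)\,\psi*\check\psi\bigr)\,dm_K \longrightarrow \int f\,dm_K .
\]
Similarly, the expected empirical measure of $(x_g)$ tends to $(1+m\psi)\,dm_K\neq m_K$.
\end{enumerate}

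The main obstacle is almost-sure convergence in step (iv): the random averages must concentrate around their expectations, for countably many $f$ and $h$, simultaneously along all of $\mathcal F$ and not just a subsequence.
\begin{itemize}
\item The terms for $g$ and $g'$ are dependent only when $\{g,hg\}\cap\{g',hg'\}\neq\emptyset$, which gives bounded dependency. A fourth-moment or Hoeffding bound then yields deviation probabilities $\exp(-c|F_n|)$.
\item If $|F_n|\to\infty$ fast enough for Borel--Cantelli, this finishes the argument. Otherwise, first pass to a subsequence with $|F_n|\geq n$, and handle the full sequence using the fact that u.d.\ along $\mathcal F$ must hold for all $n$. Since F\o lner sets in an infinite group satisfy $|F_n|\to\infty$, a concentration bound of the form $\exp(-c|F_n|)$ summed over distinct sizes should suffice. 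Repeated sizes are the remaining subtlety; I would address them by a union bound over the distinct sets $F_n$ of each given size, or by slightly enlarging the F\o lner sets to avoid repeats.
\end{itemize}
Finally, choosing $\psi$ continuous ensures that the limiting density computation tests correctly against $C(K)$.
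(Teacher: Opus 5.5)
Your direction ``measure-theoretic $(K,G)$-vdC $\Rightarrow$ $(K,G,\mathcal F)$-vdC'' is the paper's argument and is fine. The indexing in (ii) needs no adjustment: with $(R_a\mathbf y)_b=y_{ba}$ one has $R_aR_g=R_{ag}$, so left-F\o lner averages give an $R$-invariant limit, and $k_hk_e^{-1}$ evaluated at $R_g\mathbf x$ is $x_{hg}x_g^{-1}$.

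The other direction has a genuine gap, exactly at the concentration step you flag, and your suggested patches do not fix it. A left-F\o lner sequence may contain enormously many \emph{distinct} sets of each size. For example, in $G=\mathbb Z$, list for each $k$ all intervals $\{a+1,\dots,a+k\}$ with $0\le a\le 2^{2^k}$. This is F\o lner, yet $\sum_n e^{-c|F_n|}=\infty$. So neither Borel--Cantelli nor a union bound over the distinct sets of a given size applies. Enlarging the sets is not allowed either, since the statement concerns the given $\mathcal F$.

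In fact the independent construction itself fails for such $\mathcal F$, not just its proof. Take $K=\mathbb T$, $\psi(t)=\cos 2\pi t$, $h\in H$, and a short arc $B$ around $1/4$, so that $\kappa_{\pm1}(B)\ge\beta>0$. Each window of length $k+|h|$ has all $x_g\in B$ with probability at least $\beta^{k+|h|}$. There are about $2^{2^k}/(k+|h|)$ disjoint such windows, and they are independent. Hence, almost surely, for all large $k$ some $F_n$ of size $k$ has all increments $x_{g+h}-x_g$ in the short arc $B-B$. A bump function equal to $1$ on $B-B$ then shows that the increments are almost surely not $\mathcal F$-u.d. Your random argument does go through when $\sum_n e^{-c|F_n|}<\infty$ for every $c>0$ (e.g.\ $F_N=\{1,\dots,N\}$), but not for a general F\o lner sequence.

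The missing ingredient is a \emph{deterministic} realization of a measure-theoretic counterexample: a sequence whose pattern statistics along the full, arbitrary F\o lner sequence equal the corresponding integrals. The paper gets this from the inverse Furstenberg correspondence principle \cite[Theorem~2.7]{SaulInverseFurstenberg}, in three steps:
\begin{enumerate}
\item Lift $F\colon X\to K$ to $\widetilde F=\sigma\circ F$ with values in the Cantor set $C$, using a Borel section $\sigma$ of a continuous surjection $q\colon C\to K$.
\item Obtain $(c_g)$ in $C$ such that, for all continuous $p$, the $\mathcal F$-averages of $p(c_{h_1g},\dots,c_{h_jg})$ equal $\int_X p(\widetilde F(T_{h_1}x),\dots,\widetilde F(T_{h_j}x))\,d\mu$.
\item Set $x_g=q(c_g)$.
\end{enumerate}
Applied to any system witnessing the failure of measure-theoretic $(K,G)$-vdC (for instance the one built in the proof of \Cref{thm:equivalenceForCompactvdC}), this directly yields a sequence violating $(K,G,\mathcal F)$-vdC. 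The $\pm1$ witness sequence and the random choice of the $x_g$ then become unnecessary.
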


\begin{proof}
We first prove that $(K,G,\mathcal F)$-vdC implies measure-theoretic
$(K,G)$-vdC.
Suppose that $H$ is $(K,G,\mathcal F)$-vdC.
Let $(X,\mathscr{B},\mu,(T_g)_{g\in G})$ be an m.p.s., and let $F:X\to K$ be Borel measurable. 
Assume that
\begin{equation}\label{eq:measure-increments-Haar}
    x\longmapsto F(T_hx)F(x)^{-1}
\end{equation}
is Haar distributed for every $h\in H$.

Let $C\subseteq[0,1]$ be the middle-thirds Cantor set.
Since $K$ is compact and metrizable, Theorems 4.18 and 18.18 of \cite{KechrisDescriptiveSetTheory} tell us that there is a continuous surjection $q:C\rightarrow K$ and a Borel measurable section $\sigma:K\rightarrow C$, i.e., $q\circ\sigma = \text{Id}_K$.

Consider $\widetilde{F}:X\rightarrow C$ given by $\widetilde{F} := \sigma\circ F$.
By the inverse Furstenberg correspondence principle \cite[Theorem~2.7]{SaulInverseFurstenberg}, there is a sequence $(c_g)_{g\in G}$ in $C$ such that, for every $j\in\mathbb N$, $h_1,\ldots,h_j\in G$, and every continuous $p:C^j\to\mathbb C$, we have
\begin{equation}\label{eq:Cantor-inverse-correspondence}
\begin{split}
    \lim_{n\to\infty}\frac{1}{|F_n|}
    \sum_{g\in F_n}
    p(c_{h_1g},\ldots,c_{h_jg})
    &=
    \int_X
    p\bigl(
        \widetilde F(T_{h_1}x),\ldots,
        \widetilde F(T_{h_j}x)
    \bigr)\dd\mu(x).
\end{split}
\end{equation}
Set $x_g = q(c_g)$ for all $g \in G$.
Fix $h\in H$ and $\varphi\in C(K)$.
Apply \eqref{eq:Cantor-inverse-correspondence} with $j=2$, $h_1=h$, $h_2=e$, and $p(u,v) = \varphi\left(q(u)q(v)^{-1}\right)$.
This gives
\begin{align*}
    \lim_{n\to\infty}\frac{1}{|F_n|}
    \sum_{g\in F_n}
    \varphi(x_{hg}x_g^{-1})& = \int_X
    \varphi\left(
        q(\widetilde F(T_hx))
        q(\widetilde F(x))^{-1}
    \right)\dd\mu(x) =
    \int_X
    \varphi\left(
        F(T_hx)F(x)^{-1}
    \right)\dd\mu(x)\\
    &= \int_K\varphi\dd m_K,
\end{align*}
where the final equality follows from
\eqref{eq:measure-increments-Haar}.
Hence $(x_{hg}x_g^{-1})_{g\in G}$ is $\mathcal F$-u.d. in $K$ for every $h\in H$.

Since $H$ is $(K,G,\mathcal F)$-vdC, it follows that
$(x_g)_{g\in G}$ is itself $\mathcal F$-u.d.  On the other hand,
applying \eqref{eq:Cantor-inverse-correspondence} with $j=1$,
$h_1=e$, and $p(u)=\varphi(q(u))$, we obtain
\begin{equation*}
    \int_X\varphi(F(x))\dd\mu(x) = \lim_{n\to\infty}\frac{1}{|F_n|}\sum_{g\in F_n}\varphi(x_g) = \int_K\varphi \dd m_K.
\end{equation*}
Since $\varphi \in C(K)$ was arbitrary, we have $F_*\mu=m_K$, as desired.

We now prove the converse.
Suppose that $H$ is measure-theoretic $(K,G)$-vdC.
Let $(x_g)_{g\in G}$ be a sequence in $K$ such that $\left(x_{hg}x_g^{-1}\right)_{g\in G}$ is $\mathcal F$-u.d. in $K$ for every $h\in H$.

Let $Z = K^G$ with its product topology, and define the right-shift action
$(R_a)_{a\in G}$ on $Z$ by $(R_a\mathbf y)_b=y_{ba}$ for all $\mathbf y=(y_b)_{b\in G}\in Z$.
Regard $\mathbf x=(x_g)_{g\in G}$ as a point of $Z$, and define
\begin{equation*}
    \nu_n
    =
    \frac{1}{|F_n|}
    \sum_{g\in F_n}\delta_{R_g\mathbf x}.
\end{equation*}
Since $Z$ is a compact metric space, the space of Borel probability measures on $Z$ is weakly sequentially compact, so let $\nu$ be an arbitrary weak cluster point of $(\nu_n)$, say $\nu_{n_j}\rightarrow\nu$.
Since $\mathcal{F}$ is a left-F\o lner sequence, $\nu$ is a $R$-invariant probability measure.

Let $\pi_e:Z\rightarrow K$ be the identity-coordinate map, i.e., $\pi_e(\mathbf y)=y_e$.
For $h\in G$, define
\begin{equation*}
    \Delta_h(\mathbf y)
    =
    \pi_e(R_h\mathbf y)\pi_e(\mathbf y)^{-1}
    =
    y_hy_e^{-1},
\end{equation*}
and observe that both $\pi_e$ and $\Delta_h$ are continuous.
Consequently, for every $\varphi\in C(K)$ and $h\in H$, we have
\begin{align*}
    \int_Z\varphi(\Delta_h(\mathbf y))\dd\nu(\mathbf y)
    &=
    \lim_{j\to\infty}
    \int_Z\varphi(\Delta_h(\mathbf y))\dd\nu_{n_j}(\mathbf y) =
    \lim_{j\to\infty}
    \frac{1}{|F_{n_j}|}
    \sum_{g\in F_{n_j}}
    \varphi\left(
        \Delta_h(R_g\mathbf x)
    \right)\\
    &=
    \lim_{j\to\infty}
    \frac{1}{|F_{n_j}|}
    \sum_{g\in F_{n_j}}
    \varphi(x_{hg}x_g^{-1}) =
    \int_K\varphi\dd m_K.
\end{align*}
We now see that $(\Delta_h)_*\nu = m_K$ for all $h \in H$, so we may apply the measure-theoretic $(K,G)$-vdC property to the map $\pi_e:Z\to K$ to see that $(\pi_e)_*\nu = m_K$.

Finally, set
\begin{equation*}
    \lambda_n
    =
    \frac{1}{|F_n|}
    \sum_{g\in F_n}\delta_{x_g}
    =
    (\pi_e)_*\nu_n.
\end{equation*}
Since $(\pi_e)_*$ is weakly continuous, and every weak cluster point $\nu$ of $(\nu_n)_{n = 1}^\infty$ satisfies $(\pi_e)_*\nu = m_K$, we see that $\lambda_n\to m_K$ weakly.
Therefore $H$ is $(K,G,\mathcal F)$-vdC.
\end{proof}

\begin{remark}\label{rem:MetrizabilityAssumption}
    As discussed in Chapters 3 and 4 of \cite{KuipersAndNiederreiter} as well as \cite{LosertUDSequencesInCompactGroups}, the compact group $K$ admits a uniformly distributed sequence if and only if $K$ is separable.
    Consequently, if $K$ is a nonseparable compact group, then every nonempty $H \subseteq G$ is vacuously a $(K,G,\mathcal{F})$-vdC set.
Separability of a compact group does not imply metrizability: for example, both the product group $(\mathbb{Z}/2\mathbb{Z})^{\mathbb{R}}$ and the Bohr compactification $b\mathbb{Z}$ are separable and nonmetrizable. 
We believe that if one is willing to work with non-standard probability spaces $(X,\mathscr{B},\mu)$ (cf. \cite{JamneshanTaoMeasureTheory}), then all of the results of this section extend to the setting in which $K$ is a compact separable group, but we do not pursue this here.
\end{remark}
%%%%%%%%%%%%%%%%%%%%%%%%%%%%%%%%%%%%%%%%%%%%%%%%%%%%%%%%%%%%%%%%%%%%%%%%%%%%%%%%%%%%%%%%%%%%%%%%%%%%%%%%%%%%%%%%%%%%%%%%
\section{vdC sets are sets of operatorial recurrence}\label{sec:vdCIsOperatorRecurrent}

\subsection{Normal random vectors and the Gaussian measure space construction}\label{ssec:GMSC}
Suppose that $\nu$ is a probability measure on $\mathbb{R}^d$ and $\mathbf{x}$ is a random variable taking values in $\mathbb{R}^d$. We use the standard notation $\mathbf{x}\sim\nu$ to mean that the random
vector $\mathbf{x}$ has distribution (or law) $\nu$; equivalently,
$\mathbb{P}(\mathbf{x}\in A)=\nu(A)$ for every Borel set $A$. If
$\mathbf{x}$ and $\mathbf{y}$ are random vectors with values in the same
Euclidean space, the notation $\mathbf{x}\sim\mathbf{y}$ means that they have the same distribution.

We begin by recalling some basic facts about normal random vectors.
Let $\mathcal{N}(0,1)$ denote the normal distribution on $\mathbb{R}$ with mean
$0$ and variance $1$.
A random vector
\begin{equation*}
\mathbf{z}=
\begin{pmatrix}
z_1&\cdots&z_d
\end{pmatrix}^{\!\top}
\end{equation*}
in $\mathbb{R}^d$ is called a \emph{standard normal random vector} if its
coordinates are independent and each $z_i$ has distribution
$\mathcal{N}(0,1)$.

Let $A\in\mathcal{M}_d(\mathbb{R})$ and set $\Sigma=AA^{\top}$.  If
$\mathbf{z}$ is a standard normal random vector in $\mathbb{R}^d$, then
\begin{equation*}
\mathbf{x}=A\mathbf{z}
\end{equation*}
is called a \emph{centered normal random vector} with distribution
$\mathcal{N}_d(0,\Sigma)$. The notation $\mathbf{x}\sim\mathcal{N}_d(0,\Sigma)$ reflects the facts that $\mathbb{E}[\mathbf{x}]=0$ and that $\Sigma$ is the covariance matrix of $\mathbf{x}$. Indeed,
\begin{align*}
\operatorname{Cov}(\mathbf{x})
:=\bigl(\operatorname{Cov}(x_i,x_j)\bigr)_{i,j=1}^d
=\mathbb{E}\bigl[\mathbf{x}\mathbf{x}^{\top}\bigr]
=A\mathbb{E}\bigl[\mathbf{z}\mathbf{z}^{\top}\bigr]A^{\top}
=AI_dA^{\top}
=AA^{\top}
=\Sigma.
\end{align*}

The notation $\mathcal{N}_d(0,\Sigma)$ is unambiguous: the distribution of
$A\mathbf{z}$ is the same for every matrix $A\in\mathcal{M}_d(\mathbb{R})$ satisfying $AA^{\top}=\Sigma$.
Indeed, if $A,B\in\mathcal{M}_d(\mathbb{R})$ satisfy $AA^{\top}=BB^{\top}=:\Sigma$, the orthogonal equivalence theorem for Gram matrices yields an orthogonal matrix $U$ such that $A=BU$ (see for example \cite[Theorem~7.3.11]{HornJohnson}). Therefore, as the standard normal distribution is invariant under linear isometries,
\begin{equation*}
A\mathbf{z}=BU\mathbf{z}\sim B\mathbf{z}.
\end{equation*}

The next lemma gives a concrete product-space version of the Gaussian measure
space construction; see \cite[Chapter~3.11]{Glasner} and
\cite[Chapter~8.2]{CornfeldFominSinai} for general treatments.

\begin{lemma}[Gaussian realization of a positive-definite function]\label{ExplicitGNSConstruction}
Let $G$ be a countably infinite group with identity element $e$, and let
$\phi\colon G\to\mathbb{R}$ be
positive definite.
There exists a unique Borel probability measure $\mu_\phi$ on $\mathbb{R}^G$ such that, if $x_g\colon\mathbb{R}^G\to\mathbb{R}$ denotes the $g$th coordinate map,
then, for every $n \in \mathbb{N}$ and every $g_1,\ldots,g_n\in G$,
\begin{equation*}
\begin{pmatrix}
x_{g_1}&\cdots&x_{g_n}
\end{pmatrix}^{\!\top}
\sim
\mathcal{N}_n\left(0,
\bigl(\phi(g_i^{-1}g_j)\bigr)_{i,j=1}^n\right).
\end{equation*}
In particular,
\begin{equation*}
\int_{\mathbb{R}^G}x_gx_k\dd\mu_\phi
=\phi(g^{-1}k)
\qquad(g,k\in G).
\end{equation*}
Moreover, $\mu_\phi$ is invariant under the shift action
$(T_a)_{a\in G}$ defined by $(T_a\mathbf{x})_g=x_{a^{-1}g}$, for all $a,g\in G,\ \mathbf{x}\in\mathbb{R}^G$.
\end{lemma}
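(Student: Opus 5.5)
The plan is to construct $\mu_\phi$ via the Kolmogorov extension theorem, using the finite-dimensional Gaussian laws prescribed by $\phi$. Uniqueness and shift-invariance will then follow from the fact that a Borel measure on $\mathbb{R}^G$ is determined by its finite-dimensional marginals.

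First, for any finite tuple $g_1,\ldots,g_n\in G$, I will check that the matrix $\Sigma=(\phi(g_i^{-1}g_j))_{i,j}$ is real, symmetric and positive semidefinite. Positive semidefiniteness is exactly positive definiteness of $\phi$. For symmetry, recall that positive-definite functions satisfy $\phi(g^{-1})=\overline{\phi(g)}$; since $\phi$ is real, $\phi(g_j^{-1}g_i)=\phi(g_i^{-1}g_j)$. Hence $\Sigma=AA^\top$ for some real $A$, for instance $A=\Sigma^{1/2}$. By the discussion preceding the lemma, $\mathcal{N}_n(0,\Sigma)$ is then a well-defined law on $\mathbb{R}^n$. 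Repeated $g_i$ cause no trouble: the corresponding coordinates simply coincide almost surely.

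Second, I will verify Kolmogorov consistency, indexing the marginals by finite subsets $F\subseteq G$, ordered arbitrarily. There are two requirements.
\begin{enumerate}[label=(\roman*)]
\item \emph{Permutation invariance.} If $P$ is a permutation matrix and $\mathbf{x}\sim\mathcal{N}_n(0,\Sigma)$, then $P\mathbf{x}=PA\mathbf{z}\sim\mathcal{N}_n(0,P\Sigma P^\top)$, which is the law prescribed for the permuted tuple.
\item \emph{Marginalization.} If $Q$ is the coordinate projection onto a sub-tuple, then $Q\mathbf{x}=(QA)\mathbf{z}$. This is of the form $B\mathbf{z}$ with $B$ non-square. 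I will use the standard fact that the law of $B\mathbf{z}$ depends only on $BB^\top$, which can be proved via characteristic functions: $\E e^{i\langle t,B\mathbf{z}\rangle}=e^{-\frac12 t^\top BB^\top t}$. Since $BB^\top=Q\Sigma Q^\top$ is the prescribed covariance of the sub-tuple, consistency follows.
\end{enumerate}
The characteristic function computation $\E e^{i\langle t,\mathbf{x}\rangle}=e^{-\frac12 t^\top\Sigma t}$ handles both points at once, and also covers the statement with repeated indices. Since $\mathbb{R}$ is Polish and $G$ is countable, Kolmogorov's extension theorem gives a Borel probability measure $\mu_\phi$ on $\mathbb{R}^G$ with the required marginals. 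Uniqueness holds because cylinder sets form a $\pi$-system generating the product $\sigma$-algebra, which coincides with the Borel $\sigma$-algebra since $G$ is countable. The covariance identity $\int x_gx_k\dd\mu_\phi=\phi(g^{-1}k)$ is the $n=2$ case, read off from $\operatorname{Cov}(\mathbf{x})=\Sigma$.

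Third, for invariance, fix $a\in G$. Under $(T_a)_*\mu_\phi$, the tuple $(x_{g_1},\ldots,x_{g_n})$ has the law of $(x_{a^{-1}g_1},\ldots,x_{a^{-1}g_n})$ under $\mu_\phi$, namely $\mathcal{N}_n(0,(\phi(g_i^{-1}aa^{-1}g_j))_{i,j})=\mathcal{N}_n(0,(\phi(g_i^{-1}g_j))_{i,j})$. So $(T_a)_*\mu_\phi$ has the same finite-dimensional marginals as $\mu_\phi$, and uniqueness gives $(T_a)_*\mu_\phi=\mu_\phi$. I will also check that $T_a$ is a measurable left action: $(T_aT_b\mathbf{x})_g=(T_b\mathbf{x})_{a^{-1}g}=x_{b^{-1}a^{-1}g}=(T_{ab}\mathbf{x})_g$.

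The main obstacle, though mild, is the consistency step. The paper has only shown that $\mathcal{N}_d(0,\Sigma)$ is well defined for square $A$, so projections, and tuples with repeated or degenerate entries, need the characteristic-function argument. I would prove that identity once and derive everything from it.
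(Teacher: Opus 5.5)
Your argument is correct. The paper gives no proof of this lemma and only points to the general Gaussian measure space construction in Glasner and in Cornfeld--Fomin--Sinai; your Kolmogorov-extension argument is the standard construction those references carry out. Its steps are consistency of the finite-dimensional laws via $\E e^{i\langle t,\mathbf{x}\rangle}=e^{-\frac12 t^\top\Sigma t}$, uniqueness from the $\pi$-system of cylinder sets, and shift-invariance from uniqueness together with $\phi\bigl((a^{-1}g_i)^{-1}a^{-1}g_j\bigr)=\phi(g_i^{-1}g_j)$. It also correctly covers projections and repeated indices through the characteristic-function identity, which the paper's square-$A$ discussion does not reach.
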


In what follows, we will need to compute $\mathbb{P}(x_1x_2>0)$ for a
centered normal random vector $\mathbf{x}=(x_1,x_2)^\top$ in $\mathbb{R}^2$.

\begin{lemma}
\label{IntegralOfSignForGaussian}
Let $\mathbf{x}=(x,y)^\top$ be a centered normal random vector in
$\mathbb{R}^2$ with covariance matrix
$\left(\begin{smallmatrix}a&b\\ b&d\end{smallmatrix}\right)$, where $a,d>0$.
Then,
\begin{equation*}
\mathbb{P}(xy>0)
=\frac12+\frac{1}{\pi}\arcsin\left(\frac{b}{\sqrt{ad}}\right).
\end{equation*}
\end{lemma}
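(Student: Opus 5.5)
We reduce to a standard bivariate normal vector and then compute the probability of a sign-symmetric wedge by rotation invariance. First, since $a,d>0$, we normalize and set $u=x/\sqrt a$, $v=y/\sqrt d$. The event $\{xy>0\}$ coincides with $\{uv>0\}$. The vector $(u,v)^\top$ is centered normal with covariance $\left(\begin{smallmatrix}1&\rho\\ \rho&1\end{smallmatrix}\right)$, where $\rho=b/\sqrt{ad}$. Positive semidefiniteness of the covariance gives $|\rho|\le1$, so $\rho=\sin\theta$ for a unique $\theta\in[-\pi/2,\pi/2]$.

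Next, we represent $(u,v)$ via a standard normal vector $\mathbf{z}=(z_1,z_2)^\top$. Choose unit vectors $\mathbf{p},\mathbf{q}\in\R^2$ with $\langle \mathbf{p},\mathbf{q}\rangle=\rho$, and let $A$ be the matrix with rows $\mathbf{p}^\top,\mathbf{q}^\top$. Then $AA^\top$ is the normalized covariance matrix. By the unambiguity of $\mathcal{N}_2(0,\Sigma)$ established just before \Cref{ExplicitGNSConstruction}, we may therefore take $u=\langle\mathbf{p},\mathbf{z}\rangle$ and $v=\langle\mathbf{q},\mathbf{z}\rangle$. The angle between $\mathbf{p}$ and $\mathbf{q}$ is $\alpha=\arccos\rho=\pi/2-\theta\in[0,\pi]$.

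The key step uses the rotation invariance of the standard normal law on $\R^2$. Writing $\mathbf{z}$ in polar form, its direction $\Theta$ is uniform on the circle and is independent of $|\mathbf{z}|$; moreover $\mathbf{z}=0$ has probability zero. The sign of $u$ (resp.\ $v$) depends only on $\Theta$: $u>0$ exactly when $\Theta$ lies in an open half-circle centered at $\mathbf{p}$, and similarly for $v$ and $\mathbf{q}$. The lines $\{u=0\}$ and $\{v=0\}$ have angular measure zero.

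It remains to measure the disagreement set. The two half-circles, centered at directions separated by angle $\alpha$, disagree on two arcs, each of length $\alpha$. Hence
\begin{equation*}
\mathbb{P}(uv<0)=\frac{2\alpha}{2\pi}=\frac{\alpha}{\pi},
\qquad
\mathbb{P}(xy>0)=\mathbb{P}(uv>0)=1-\frac{\alpha}{\pi}=1-\frac{1}{\pi}\Bigl(\frac{\pi}{2}-\arcsin\rho\Bigr)=\frac12+\frac1\pi\arcsin\Bigl(\frac{b}{\sqrt{ad}}\Bigr).
\end{equation*}
The only delicate point is justifying the geometric picture: the uniformity of $\Theta$ and the arc-length count of the disagreement region. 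The degenerate cases $\rho=\pm1$ need a separate check, since there $\mathbf{p}=\pm\mathbf{q}$. Even so, the arc count still gives $\mathbb{P}(uv>0)=1$ and $0$ respectively, which agrees with the formula.
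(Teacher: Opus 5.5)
Your proof is correct, but it takes a more self-contained route than the paper. The paper's proof is two lines. It uses the symmetry $(x,y)\sim(-x,-y)$ to write $\mathbb{P}(xy>0)=2\,\mathbb{P}(x>0,y>0)$, and then quotes Sheppard's orthant formula $\mathbb{P}(x>0,y>0)=\frac14+\frac{1}{2\pi}\arcsin\bigl(b/\sqrt{ad}\bigr)$ from the literature.

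You instead prove the needed identity from scratch. After normalizing to correlation $\rho$, you realize $(u,v)$ as $(\langle\mathbf{p},\mathbf{z}\rangle,\langle\mathbf{q},\mathbf{z}\rangle)$ with unit Gram vectors, which correctly relies on the unambiguity of $\mathcal{N}_d(0,\Sigma)$ established in the paper. You then use the uniformity of the angle of a standard Gaussian vector in $\R^2$ to count the two disagreement arcs. This gives $\mathbb{P}(uv<0)=\arccos(\rho)/\pi$ directly, with no orthant-symmetry step. This is essentially the classical geometric proof of Sheppard's formula.

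What your version buys is self-containedness. It also treats the singular case $b^2=ad$ with no extra work, and that case can actually arise in the paper's application, e.g.\ when $\phi(h)=-\phi(e)$. In fact your arc count is valid verbatim for $\alpha\in\{0,\pi\}$, so the ``separate check'' you mention is not needed. The paper's version buys brevity by outsourcing all of the content to the citation.
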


\begin{proof}
By symmetry,
$\mathbb{P}(xy>0)=2\mathbb{P}(x>0,y>0)$, and Sheppard's formula (see \cite[Section~2.2.1]{GenzBretz}) gives
\begin{equation*}
\mathbb{P}(x>0,y>0)
=\frac14+\frac{1}{2\pi}\arcsin\left(\frac{b}{\sqrt{ad}}\right).\qedhere
\end{equation*}
\end{proof}

\subsection{VdC implies operatorial recurrence}
Let $G$ be a countably infinite group.

\begin{theorem}\label{Thm:vdCiffOpRec}
For every nonempty subset $H$ of $G$, the following are equivalent:
\begin{enumerate}[label=\alph*)]
 \item\label{Thm:vdCiffOpRec1}
    ($H$ is vdC): for every m.p.s. $(X,\mathscr{B},\mu,(T_g)_{g\in G})$ and every
    $f\in L^\infty(X,\mu)$,
    \begin{equation*}
        \int_X f(T_hx)\overline{f(x)}\dd\mu(x)=0
        \quad\text{for every }h\in H
        \quad\Longrightarrow\quad
        \int_X f\dd\mu=0.
    \end{equation*}
    \item\label{Thm:vdCiffOpRec2} ($H$ is a set of operatorial recurrence): for every m.p.s. $(X,\mathscr{B},\mu,(T_g)_{g\in G})$ and every
    $f\in L^2(X,\mu)$,
    \begin{equation*}
        \int_X f(T_hx)\overline{f(x)}\dd\mu(x)=0
        \quad\text{for every }h\in H
        \quad\Longrightarrow\quad
        \int_X f\dd\mu=0.
    \end{equation*}
    \item\label{Thm:vdCiffOpRec3} There do not exist a positive-definite function
    $\varphi\colon G\to\mathbb{C}$ and a constant $r<0$ such that \begin{equation*}
        \varphi(h)=r
        \qquad\text{for every }h\in H.
    \end{equation*}
\end{enumerate}
\end{theorem}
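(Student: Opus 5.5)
The plan is to prove the cycle \ref{Thm:vdCiffOpRec2}$\implies$\ref{Thm:vdCiffOpRec1}$\implies$\ref{Thm:vdCiffOpRec3}$\implies$\ref{Thm:vdCiffOpRec2}. The first implication is immediate, since $L^\infty(X,\mu)\subseteq L^2(X,\mu)$.

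For \ref{Thm:vdCiffOpRec3}$\implies$\ref{Thm:vdCiffOpRec2}, argue by contraposition. Suppose $f\in L^2(X,\mu)$ has $\int_X f(T_hx)\overline{f(x)}\dd\mu=0$ for all $h\in H$ but $c:=\int_X f\dd\mu\neq0$. Set $f_0=f-c$ and
\[
\varphi(g)=\int_X f_0(T_gx)\overline{f_0(x)}\dd\mu(x).
\]
This is a matrix coefficient $\langle \pi(g)f_0,f_0\rangle$ of a unitary representation of $G$: the Koopman representation, up to the harmless replacement $g\mapsto g^{-1}$. Hence $\varphi$ is positive definite. Expanding the product and using $T$-invariance of $\mu$ gives
\[
\varphi(g)=\int_X f(T_gx)\overline{f(x)}\dd\mu-|c|^2 .
\]
Therefore $\varphi(h)=-|c|^2<0$ for every $h\in H$, so \ref{Thm:vdCiffOpRec3} fails.

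For \ref{Thm:vdCiffOpRec1}$\implies$\ref{Thm:vdCiffOpRec3}, again argue by contraposition. Suppose $\varphi$ is positive definite with $\varphi\equiv r<0$ on $H$.

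\emph{Reductions.} Replacing $\varphi$ by $\operatorname{Re}\varphi$ keeps it positive definite (average $\varphi$ with its conjugate), so we may assume $\varphi$ is real; then $\varphi(g^{-1})=\varphi(g)$. Since $\varphi(e)\ge|\varphi(h)|>0$, we may rescale so that $\varphi(e)=1$ and $r\in[-1,0)$. Note also that $e\notin H$.

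\emph{Gaussian realization.} Take the measure $\mu_\varphi$ on $\mathbb{R}^G$ given by \Cref{ExplicitGNSConstruction}, with its shift action, and define the bounded function $s(\mathbf{x})=\operatorname{sgn}(x_e)$. Since $x_e$ is a symmetric Gaussian, $\int s\dd\mu_\varphi=0$. Moreover $s(T_h\mathbf{x})=\operatorname{sgn}(x_{h^{-1}})$, and $(x_{h^{-1}},x_e)$ is centered normal with unit variances and covariance $\varphi(h)=r$. By \Cref{IntegralOfSignForGaussian},
\[
\int s(T_h\mathbf{x})s(\mathbf{x})\dd\mu_\varphi
=2\,\mathbb{P}(x_{h^{-1}}x_e>0)-1
=\frac{2}{\pi}\arcsin r=:\kappa<0
\]
for every $h\in H$. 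The key point is that this value is the same for all $h\in H$.

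\emph{Building the witness.} Let $f=a+s$ with $a=\sqrt{-\kappa}>0$. Because $\int s=0$, we get $\int f(T_h\mathbf{x})f(\mathbf{x})\dd\mu_\varphi=a^2+\kappa=0$ for all $h\in H$. On the other hand $\int f\dd\mu_\varphi=a\neq0$, and $f$ is bounded. So $H$ is not vdC, and \ref{Thm:vdCiffOpRec1} fails.

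The main obstacle is exactly the passage from an $L^2$ (Gaussian, unbounded) realization to a bounded function while keeping the correlations constant on $H$. Truncation would destroy this constancy. The sign map solves it because Sheppard's formula makes the correlation a function of $\varphi(h)$ alone, and that value is constant on $H$.

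The remaining checks are routine: the covariance indexing under the convention $(T_a\mathbf{x})_g=x_{a^{-1}g}$, which uses $\varphi(h^{-1})=\varphi(h)$, and confirming that the Gaussian system is a legitimate m.p.s. on a standard space.
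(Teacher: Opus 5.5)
Your proposal is correct and follows the paper's proof. It uses the same cycle of implications and the same decomposition $f=c+f_0$ for \ref{Thm:vdCiffOpRec3}$\Rightarrow$\ref{Thm:vdCiffOpRec2}. For \ref{Thm:vdCiffOpRec1}$\Rightarrow$\ref{Thm:vdCiffOpRec3} it uses the same Gaussian realization with $\operatorname{sgn}(x_e)$ and Sheppard's formula.

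The only difference is the last step. You add the constant $\sqrt{-\kappa}$ to $s$, whereas the paper adjoins a fixed point $*$ of mass $-\lambda/(1-\lambda)$ with $f(*)=1$. Your version works equally well, since the cross terms vanish because $\int s\,d\mu_\varphi=0$. It also keeps the witness on the Gaussian system itself, without the continuity argument of the paper's closing remark.
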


\begin{remark}\label{rem:NontrivialityOfLInfinityVersusL2}
    Observe that the coordinate projection maps $x_g$ in \Cref{ExplicitGNSConstruction} are unbounded functions.
    Farhangi and Tucker-Drob asked in \cite[Question 3.7]{FTD} if a version of \Cref{ExplicitGNSConstruction} holds using only bounded functions, as that would show that every vdC set is a set of operatorial recurrence.
    A detailed discussion about the negative answer to \cite[Question 3.7]{FTD} will be the subject of another paper.
    For now, we only mention that there exists a positive definite sequence $\phi$ on the free group $F_2$ for which $\phi(e) = 1$, but for every measure-preserving system $(X,\mathscr{B},\mu,(T_g)_{g \in F_2})$ and every $f \in L^2(X,\mu)$ satisfying $\int_XT_gf\overline{f}\dd\mu = \phi(g)$ for all $g \in F_2$, we have $f \notin L^\infty(X,\mu)$.
\end{remark}

\begin{proof}
The implication
$\ref{Thm:vdCiffOpRec2}\Rightarrow\ref{Thm:vdCiffOpRec1}$ is immediate
from the inclusion $L^\infty(X,\mu)\subseteq L^2(X,\mu)$. 

We now prove
$\neg\ref{Thm:vdCiffOpRec2}\Rightarrow
\neg\ref{Thm:vdCiffOpRec3}$.
Suppose that there exist an m.p.s.
$(X,\mathscr{B},\mu,(T_g)_{g\in G})$ and a function $f\in L^2(X,\mu)$
such that
\begin{equation*}
\int_X f(T_hx)\overline{f(x)}\dd\mu(x)=0
\quad\text{for every }h\in H,
\qquad
a:=\int_X f\dd\mu\neq0.
\end{equation*}
Write $f=a\,\mathbbm{1}_X+f_0$, where $\int_X f_0\dd\mu=0$.
The correlation function $\varphi\colon G\to\mathbb{C}$ defined by
\begin{equation*}
\varphi(g):=\int_X f_0(T_gx)\overline{f_0(x)}\dd\mu(x)
\end{equation*}
is positive definite.
Moreover, for every $h\in H$, using that $\int_Xf_0\dd\mu=0$, we have
\begin{align*}
0=\int_X f(T_hx)\overline{f(x)}\dd\mu(x)
=\int_X (f_0(T_hx)+a)\overline{(f_0(x)+a)}\dd\mu(x)
=\int_X f_0(T_hx)\overline{f_0(x)}\dd\mu(x)+|a|^2,
\end{align*}
that is, $\varphi(h)=-|a|^2$, so condition \ref{Thm:vdCiffOpRec3} fails, as desired.

Finally, we prove $\neg$\ref{Thm:vdCiffOpRec3}$\implies\neg$\ref{Thm:vdCiffOpRec1}.
Let $\phi:G\to\mathbb{C}$ be positive definite and suppose that for some $r<0$ we have $\phi(h)=r$ for all $h\in H$.
We can suppose $\phi$ is real-valued; otherwise take its real part, which is still positive definite.
Moreover, $\phi(e)>0$, since otherwise positive definiteness would force $\phi$ to vanish identically.

Now consider $\mathbb{R}^G$, whose points we denote by
$\mathbf{x}=(x_g)_{g\in G}$, with its Borel $\sigma$-algebra and the shift
action $(T_a)_{a\in G}$ defined by $(T_a\mathbf{x})_g=x_{a^{-1}g}$, for all
$a,g\in G$ and $\mathbf{x}\in\mathbb{R}^G$.

\Cref{ExplicitGNSConstruction} gives us a Borel probability measure
$\mu_\phi$ on $\mathbb{R}^G$ such that, for all $g\in G$, the joint
distribution of the pair $(x_e,x_g)\in\mathbb{R}^2$ is a centered normal
random vector with covariance matrix
\begin{equation*}
\begin{pmatrix}
\phi(e)&\phi(g)\\
\phi(g)&\phi(e)
\end{pmatrix}.
\end{equation*}
In particular, for every $h\in H$, the pair $(x_{h^{-1}},x_e)$ has covariance matrix $\left(\begin{smallmatrix}a&r\\r&a\end{smallmatrix}\right)$, where $a:=\phi(e)>0$.

Now consider the sign function $s:\mathbb{R}\to\{-1,1\}$ given by $s(x)=1$ if $x\geq0$ and $s(x)=-1$ if $x<0$, and let $f_0:\mathbb{R}^G\to\{-1,1\}$ be given by $f_0((x_g)_{g\in G})=s(x_e)$.
We then have
\begin{equation*}
\int_{\mathbb{R}^G}f_0\dd\mu_\phi
=
\int_{\mathbb{R}^G}s(x_e)\dd\mu_\phi(\mathbf{x})=0,
\end{equation*}
since $x_e$ has a centered, nondegenerate normal distribution and is therefore
symmetric about $0$.
On the other hand, by \Cref{IntegralOfSignForGaussian}, for all $h\in H$ we have
\begin{align*}
\int_{\mathbb{R}^G} f_0(T_h\mathbf{x})f_0(\mathbf{x})\dd\mu_\phi(\mathbf{x})
&=\int_{\mathbb{R}^G}s(x_{h^{-1}})s(x_e)\dd\mu_\phi(\mathbf{x}) = 2\mathbb{P}(x_{h^{-1}}x_e>0)-1\\
&= \frac{2}{\pi}\arcsin\left(\frac{r}{a}\right) =:\lambda<0.
\end{align*}

To turn these negative correlations into zero correlations, we adjoin a fixed
point $*$ to $\mathbb{R}^G$. More precisely, let $
Y:=\mathbb{R}^G\sqcup\{*\}$,
extend the action to $Y$ by setting $S_g|_{\mathbb{R}^G}=T_g$ and
$S_g(*)=*$, and equip $Y$ with the probability measure
\begin{equation*}
\nu:=\frac{1}{1-\lambda}\mu_\phi
+\frac{-\lambda}{1-\lambda}\delta_*.
\end{equation*}
Finally, define $f\colon Y\to\{-1,1\}$ by
$f|_{\mathbb{R}^G}=f_0$ and $f(*)=1$. Then we are done, as for every $h\in H$,
\begin{align*}
\int_Y f(S_hy)\overline{f(y)}\,d\nu(y)
&=\frac{\lambda}{1-\lambda}
  +\frac{-\lambda}{1-\lambda}=0,
\end{align*}
whereas
\begin{equation*}
\int_Y f\,d\nu
=\frac{-\lambda}{1-\lambda}>0.\qedhere
\end{equation*}
\end{proof}

\begin{remark}
In the proof of
$\neg\ref{Thm:vdCiffOpRec3}\Rightarrow\neg\ref{Thm:vdCiffOpRec1}$, one
can obtain a counterexample on the original Gaussian m.p.s.
$(\mathbb{R}^G,\mathscr{B}(\mathbb{R}^G),\mu_\phi,(T_g)_{g\in G})$, without
adjoining a fixed point, by defining
$f_0(\mathbf{x})=s(x_e+\kappa)$ for a suitable $\kappa>0$. Indeed, the
correlations
\begin{equation*}
\int_{\mathbb{R}^G}f_0(T_h\mathbf{x})f_0(\mathbf{x})\dd\mu_\phi(\mathbf{x})
\qquad(h\in H)
\end{equation*}
have a common value that depends continuously on $\kappa$, is equal to
$\lambda<0$ when $\kappa=0$, and tends to $1$ as $\kappa\to\infty$.
Thus this value is zero for some $\kappa>0$, while
$\int_{\mathbb{R}^G}f_0\dd\mu_\phi>0$.
\end{remark}

%%%%%%%%%%%%%%%%%%%%%%%%%%%%%%%%%%%%%%%%%%%%%%%%%%%%%%%%%%%%%%%%%%%%%%%%%%%%%%%%%%%%%%%%%%%%%%%%%%%%%%%%%%%%%%%%%%%%%%%%
\bibliographystyle{alpha}
\begin{center}
	\bibliography{references}
\end{center}
\end{document}